\pgfplotsset{compat=1.18}
\definecolor{orange}{RGB}{255,90,0}
\definecolor{lightgray}{gray}{0.7}
\numberwithin{equation}{section}
\numberwithin{figure}{section}
\newcommand{\zl}[1]{\mathcal{#1}}
\newcommand{\jz}[1]{\mathbf{#1}}
\newcommand{\xl}[1]{\mathbf{#1}}
\newcommand{\quat}[1]{\mathbb{H}^{#1}}
\newcommand{\bigxiaokuohao}[1]{\ensuremath{ \left(  #1 \right) }}
\newcommand{\fnorm}[1]{\ensuremath{ \|   #1  \|_F }} 
\newcommand{\refsect}[1]{Sect.~\ref{#1}}
\newcommand{\refdef}[1]{Definition~\ref{#1}}
\newcommand{\refthm}[1]{Theorem~\ref{#1}}
\newcommand{\refeq}[1]{\eqref{#1}}
\newcommand{\refalgo}[1]{Algorithm~\ref{#1}}
\newenvironment{mytabular1}{\bgroup\footnotesize\tabular}{\endtabular\egroup}
\newtheorem{example}{Example}
\newtheorem{remark}{Remark}
\newtheorem{observation}{Observation}
\newtheorem{proposition}{Proposition}
\newtheorem{theorem}{Theorem}
\newtheorem{definition}{Definition}
\newtheorem{lemma}{Lemma}
\title{A Parallelizable Quaternion Higher-Order Singular Value Decomposition with Applications}
\author{Hanxin Ya$^*$,~  Yuning Yang\thanks{College of Mathematics and Information Science, Guangxi University, Nanning, 530004, China}~\thanks{Center for Applied Mathematics of Guangxi, Guangxi University, Nanning, 530004, China.} \thanks{Corresponding author: Yuning Yang, yyang@gxu.edu.cn} }
\begin{document}
\maketitle

\begin{abstract}
 Higher-order singular value decomposition (HOSVD) is a celebrated tool for tensor data analysis.    The sequential   HOSVD was recently  generalized to the quaternion domain, while a naive quaternion extension of the classical HOSVD
 , which can be excecuted in parallel, incurs   issues. To leverage the power of   parallel computing,  this work introduces a two-sided quaternion HOSVD (TS-QHOSVD) that can be parallelized on two processors. It is proved that TS-QHOSVD    (i) preserves the   HOSVD ordering property, (ii) inherits the orthogonality property     at the first and the last modes,  and (iii) satisfies the weak orthogonality     at all modes.  The truncated TS-QHOSVD is then developed, 
with its error bound   being established.   
  We apply the proposed model on color video denoising  as well as scientific data   compression arising from 3D Navier-Stokes equation and Lorentz system to demonstrate its efficacy.

\textbf{Key words}: Quaternion tensors, quaternion tensor unfolding,    singular value decomposition, error bound
\end{abstract}
\noindent {\bf  AMS subject classifications.}   11R52,  15A69, 15A72, 65F99

  

\section{Introduction}
{\color{black}In the big-data era, encoding multi-dimensional and multi-modal data into a tensor-based format allows one to fully exploit  the inherent  features.}
Several tensor decompositions have been proposed  \cite{kolda2009tensor,de2008decompositions,zhao2016tensor,oseledets2011tensor,kilmer2011factorization}. 
The   higher-order singular value decomposition \cite{hosvd} (HOSVD)     factorizes the unfolding matrices of the data tensor along each mode simultaneously, while  the sequential HOSVD  \cite{vannieuwenhoven2012new}   generates factor matrices sequentially from    tensors that are   the products of the data tensor and the previously computed factor matrices, which may be more efficient on a single processor.

Quaternions generalize complex numbers to four dimensions. Although      their multiplications are non-commutative,  their ability to represent 3D and 4D vectors as   single quaternion scalars  led to a growing interest in the study, especially in image and video processing,  such as image fusion and denoising \cite{miao2023quat}, face recognition \cite{zou2016quat,zou2021quat},  color video approximation \cite{miao2020low,qin2022sigular,miao2023qt-svd}, and   inpainting \cite{jia2019robust,miao2020quaternion,jia2022non,chen2022color}. \cite{miron2023quaternions} gives    comprehensive survey of applications in signal and image processing recentlly. Quaternion matrix optimization also draws much attention \cite{xu2015optimization,qi2022quaternion}. 
The adaptation of matrix computations to the quaternion domain has been a key factor in the success of quaternions   \cite{zhang1997quaternions,wei2018quaternion,jia2018new,jia2019lanczos,jia2021structure}. 
Driven by the representation of multi-dimensional data as quaternion tensors, 
  several tensor algebras have also been  extended to the quaternion case, such as the
 T-product \cite{qin2022sigular,miao2023qt-svd,zheng2023approximation}, 
  tensor-train decomposition \cite{miao2022quat},      mode-$k$ product \cite{miao2020low,miao2023quat}, and Einstein product \cite{he2023eigenvalues}.    Extensions of Tucker decomposition and CP decomposition  to  the quaternion domain were studied in  \cite{osimone2023low,flamant2024multilinear}.

  Recently, Miao et al. \cite{miao2023quat} proposed quaternion HOSVD  in the fashion of the sequential HOSVD \cite{vannieuwenhoven2012new}, and successively applied it to   image fusion and denoising. They proved the existence of such a  decomposition  and the ordering property.   This quaternion HOSVD is based on left multiplications (and due to this reason, we term   it as L-QHOSVD), namely, the   data tensor is decomposed as a core tensor  left  multiplied by   factor matrices along each mode. Note that in the quaternion domain,  tensor mode-$k$ products are not commutative. 

For large tensors, modern parallel   architechtures are helpful for accelerating computations. 
The model of \cite{miao2023quat} cannot be executed in parrallel.  The original HOSVD \cite{hosvd}  that computes the factors in parallel  might be naively generalized to   quaternions. However, unlike its real/complex counterpart \cite[Property 10]{hosvd},    the reconstruction error of the truncated HOSVD may not be controlled by the tail energy (see Example \ref{example:1}), which means that a low-rank compression of the quaternion data tensor using HOSVD is unstable. In addition, some theoretical properties of HOSVD do not hold in the quaternion domain, making the decomposition  less interpretable.  

To address this limitation, this work   proposes a two-sided quaternion HOSVD (TS-QHOSVD) that can be parallelly excecuted on two processors. 
 Roughly speaking,  given a $2m$-th order tensor,    for  Processor 1, the tensor performs mode-$k$ left products for the first $m$ modes  sequentially, for which the sequential HOSVD mechanism is executed. In parallel, for     Processor 2, similar tasks are performed for the remaining modes, except that right mode-$k$ products are performed. To achieve this, left and right mode-$k$ unfoldings, tensor-matrix products, and necessary properties, are introduced. 
We then  show that
  TS-QHOSVD   admits the ordering property as HOSVD, while the orthogonality property   holds only at the first and the last modes. As an   alternative, we define the weak orthogonality and show that it  still holds at all modes. 
We then develope truncated TS-QHOSVD and establish its      error bound.  We provide numerical examples on color video denoising, compression, and scientific data tasks to demonstrate the efficiency and effectiveness of the proposed model. In particular, we represent the data computed from a 3D Navier-Stokes equation  for microscopic natural
convection as a quaternion tensor ($18.69~{\rm GB}$ in memory, $4.86$GB in disk), and apply truncated TS-QHOSVD to compress it.  With only $12.0$MB compressed data, the reconstructed tensor well approximates the original one.

{\color{black}The rest is organized as follows.   \refsect{pre}  prepares    preliminaries on quaternions.
  \refsect{tensor} introduces  left and right mode-$k$ unfolding, mode-$k$ products, and some properties for quaternion tensors.
  \refsect{decomposition} presents TS-QHOSVD and   its properties. Then, the truncated TS-QHOSVD with its error bounds is investigated in \refsect{error}. Numerical experiments are provided in
  \refsect{examples}.
Finally, some conclusions are drawn in \refsect{conclusions}.}

\section{Preliminaries}\label{pre}

\paragraph{Notations}
  $\mathbb{R}, \mathbb{C}$, and $\mathbb{H}$ respectively denote the real, complex, and quaternion space. $  \quat{I_1\times I_2\times \cdots \times I_N}$ represents the $N$-th order quaternion tensor space.
A scalar, vector, matrix, and tensor are written as $a$, $\xl{a}$, $\jz{A}$, and $\zl{A}$, respectively.
  $\bar{(\cdot)}$, $(\cdot)^T$, and $(\cdot)^H$ respectively represent the conjugate, transpose, and conjugate transpose.
  $\otimes$ denotes the Kronecker product. $tr(\cdot)$ represents the trace of a matrix.

Throughout this work, the $(i,j)$-th entry of a matrix $\jz A$ is written as $a_{ij}$, and the $(i_1,\ldots,i_N)$-th entry of a tensor $\zl A$ is written as $a_{i_1\cdots i_N}$. At the same time,  it may be more convenient to use the MATLAB notations $\jz A(i,j)$ and $\zl A(i_1,\ldots,i_N)$ to denote the same entries in some contexts. Correspondingly,  $\jz A(i,:)$ and $\jz A(:,j)$ respectively represent the $i$-th row and the $j$-th column of $\jz A$.

A four-dimensional non-commutative division algebra $\mathbb{H}$ is defined over the real numbers $\mathbb{R}$ with 
canonical basis $\{1,\bf i,j,k\}$. Here ${\bf i},{\bf j},{\bf k}$ are imaginary units satisfying
\[\bf{i}^2=\bf{j}^2=\bf{k}^2={\bf ijk}=-1,~{\rm and}~\bf{ij}=-\bf{ji}=k,\bf{jk}=-\bf{kj}=\bf{i},\bf{ki}=-\bf{ik}=\bf{j}.\]
Any quaternion $q\in \mathbb{H}$ can be written as 
 $q=q_a+q_b{\bf i}+ { q_c} {\bf j}+q_d{\bf k}$,  
where $q_a,q_b,q_c,q_d\in \mathbb{R}$ are the components of $q$.
The real and imaginary parts of $q$ are denoted as ${ Re}(q)=q_a$ and ${ Im}(q)=q_b{\bf i}+q_c{\bf j}+q_d{\bf k}$ respectively.
For any $p,q\in \mathbb{H}$,   $pq\neq qp$ in general.
  $\bar{q}=q_a-q_b{\bf i}-q_c{\bf j}-q_d{\bf k}$ is denoted as the quaternion conjugate of $q$, and it holds that $\overline{p}\overline{q}=\bar{q}\bar{p}$.
For   $q\in \mathbb{H}$, its modulus is defined as 
$|q|=\sqrt{q\bar{q}}=\sqrt{\bar{q}q}=\sqrt{q_a^2+q_b^2+q_c^2+q_d^2}$. A quaternion vector/matrix is a vector/matrix with quaternion entries.

\begin{proposition}[c.f. \cite{zhang1997quaternions}]
    For given two quaternion  matrices $\jz{P}\in \quat{M\times N}$ and $\jz{Q}\in \quat{N\times M}$, 
\begin{itemize}
    \item $(\jz{P}\jz{Q})^H=\jz{Q}^H\jz{P}^H$; $(\jz{P}\jz{Q})^T\neq \jz{Q}^T\jz{P}^T$ in general; $\overline{\jz{P}\jz{Q}}\neq  \bar{\jz{P}}  \bar{\jz{Q}} $ in general.
\end{itemize}
\end{proposition}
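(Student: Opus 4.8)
The plan is to handle the three assertions separately: the equality $(\jz{P}\jz{Q})^H=\jz{Q}^H\jz{P}^H$ is settled by a short entrywise computation, while the two ``$\neq$'' claims only require a single small counterexample.

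For the first identity I would argue entrywise. Using that conjugation on $\mathbb{H}$ is additive and obeys the order-reversing scalar rule $\overline{pq}=\bar{q}\,\bar{p}$ (a direct check on the units $\mathbf{i},\mathbf{j},\mathbf{k}$), the $(i,j)$-entry of $(\jz{P}\jz{Q})^H$ equals
\[
\overline{(\jz{P}\jz{Q})_{ji}}=\overline{\sum_{k=1}^{N}p_{jk}q_{ki}}=\sum_{k=1}^{N}\overline{q_{ki}}\;\overline{p_{jk}},
\]
whereas the $(i,j)$-entry of $\jz{Q}^H\jz{P}^H$ is $\sum_{k}(\jz{Q}^H)_{ik}(\jz{P}^H)_{kj}=\sum_{k}\overline{q_{ki}}\;\overline{p_{jk}}$; the two expressions agree, so the matrices coincide. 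The only subtlety is that the scalar rule reverses the order of the factors, and this is exactly what cancels the order reversal caused by transposition, so that $(\cdot)^H$ still distributes with reversed order over products.

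For the two negative statements it suffices to exhibit one instance, and a scalar one already isolates the phenomenon. Take $M=N=1$ with $\jz{P}=(\mathbf{i})$ and $\jz{Q}=(\mathbf{j})$, so that transposition acts trivially. Then $(\jz{P}\jz{Q})^T=\jz{P}\jz{Q}=(\mathbf{ij})=(\mathbf{k})$ but $\jz{Q}^T\jz{P}^T=\jz{Q}\jz{P}=(\mathbf{ji})=(-\mathbf{k})$, and $\overline{\jz{P}\jz{Q}}=(\overline{\mathbf{ij}})=(-\mathbf{k})$ but $\bar{\jz{P}}\,\bar{\jz{Q}}=((-\mathbf{i})(-\mathbf{j}))=(\mathbf{ij})=(\mathbf{k})$; in both cases the two sides differ. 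There is no genuine obstacle in this proof — its content is simply that the failure of $pq=qp$ and of $\overline{pq}=\bar{p}\bar{q}$ in $\mathbb{H}$ breaks the naive transpose and conjugate product rules, while the particular combination appearing in $(\cdot)^H$ is immune; the only care needed is to keep the order of conjugation and transposition straight in the entrywise step.
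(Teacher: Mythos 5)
Your proof is correct: the entrywise computation for $(\jz{P}\jz{Q})^H=\jz{Q}^H\jz{P}^H$ via $\overline{pq}=\bar{q}\,\bar{p}$ is sound, and the $1\times 1$ counterexample $\jz{P}=(\mathbf{i})$, $\jz{Q}=(\mathbf{j})$ correctly refutes both ``$\neq$'' claims. The paper itself gives no proof (it simply cites Zhang's survey on quaternion matrices), and your argument is exactly the standard one that reference supplies, so there is nothing further to compare.
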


Although $(\jz P\jz Q)^T \neq \jz Q^T\jz P^T$ for two general quaternion matrices, if one of them is real, then as  real numbers and quaternions   commute, this relation is still valid:
\begin{proposition}
    \label{lem:real_quater_transpose_commutat}
    Assume that $\jz P\in \mathbb R^{M\times N}$ and $\jz Q\in\quat{N\times M}$
    . Then $(\jz P\jz Q)^T = \jz Q^T\jz P^T$.
\end{proposition}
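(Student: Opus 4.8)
The plan is to verify the identity entrywise, reducing everything to the single fact that a real scalar commutes with every quaternion (i.e.\ $\mathbb R$ lies in the center of $\mathbb H$). Fix indices $i,j$. By the definition of the transpose, the $(i,j)$-th entry of $(\jz P\jz Q)^T$ equals the $(j,i)$-th entry of $\jz P\jz Q$, namely $\sum_{k=1}^N p_{jk}q_{ki}$. On the other hand, the $(i,j)$-th entry of $\jz Q^T\jz P^T$ equals $\sum_{k=1}^N (\jz Q^T)_{ik}(\jz P^T)_{kj}=\sum_{k=1}^N q_{ki}p_{jk}$. Hence it suffices to show $p_{jk}q_{ki}=q_{ki}p_{jk}$ for every $k$, after which summing over $k$ and letting $i,j$ range over all admissible values yields $(\jz P\jz Q)^T=\jz Q^T\jz P^T$.

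For the termwise commutation I would split into the two stated cases. If $\jz P\in\mathbb R^{M\times N}$, then $p_{jk}\in\mathbb R$, so $p_{jk}q_{ki}=q_{ki}p_{jk}$ because real numbers commute with quaternions; if instead $\jz Q\in\mathbb R^{N\times M}$, then $q_{ki}\in\mathbb R$ and the same conclusion holds. Alternatively one could prove only the first case and then obtain the second for free: given $\jz P\in\quat{M\times N}$ and $\jz Q\in\mathbb R^{N\times M}$, apply the already-proved case to the pair $\jz Q^T\in\mathbb R^{M\times N}$, $\jz P^T\in\quat{N\times M}$ (or just transpose the identity $(\jz Q^T(\jz P^T))^{T}$ appropriately).

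There is essentially no genuine obstacle here; the only thing that must be made explicit is \emph{why} the naive identity $(\jz P\jz Q)^T=\jz Q^T\jz P^T$ fails in general and why it is rescued in the present situation: the discrepancy is exactly the difference between $\sum_k p_{jk}q_{ki}$ and $\sum_k q_{ki}p_{jk}$, i.e.\ it is produced solely by the non-commutativity of the scalar products $p_{jk}q_{ki}$, and this vanishes as soon as one of the two factors is real. Care should also be taken not to invoke any conjugation identity such as $\overline{\jz P\jz Q}=\bar{\jz P}\,\bar{\jz Q}$, which is false in general; the argument should rely only on the centrality of $\mathbb R$ in $\mathbb H$ together with the ordinary definition of matrix multiplication and transposition.
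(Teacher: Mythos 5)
Your proof is correct and matches the paper's reasoning: the paper states this proposition without a formal proof, justifying it in one line by the fact that real numbers commute with quaternions, which is exactly the entrywise argument you spell out. Your explicit identification of the discrepancy as $\sum_k p_{jk}q_{ki}$ versus $\sum_k q_{ki}p_{jk}$ is a faithful elaboration of the same idea, with no gaps.
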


\color{black}
It is also easy to see that:
\begin{proposition}
    \label{lem:conj_real_conterpart}
    For any $\jz{A}\in \quat{M\times N}$ and $\jz{B}\in \quat{N\times M}$,  one has $Re(\jz{A}\jz{B})= Re(\widebar{\jz A\jz B})= Re(\bar{\jz{A}}\bar{\jz{B}})$.
\end{proposition}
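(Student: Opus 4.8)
The plan is to reduce this matrix identity to a scalar identity about single quaternions, and then verify the scalar identity directly from the component formula for the real part of a product.

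First I would pass to entries. For $1\le i,j\le M$, the $(i,j)$-th entry of $\jz{A}\jz{B}$ is $\sum_{k=1}^{N}a_{ik}b_{kj}$, and since $Re(\cdot)$ is $\mathbb R$-linear this gives $Re(\jz{A}\jz{B})_{ij}=\sum_{k}Re(a_{ik}b_{kj})$; in the same way $Re(\bar{\jz{A}}\bar{\jz{B}})_{ij}=\sum_{k}Re\bigxiaokuohao{\overline{a_{ik}}\,\overline{b_{kj}}}$. Hence it suffices to prove the scalar statement $Re(pq)=Re(\bar p\bar q)$ for all $p,q\in\mathbb H$, and then sum over $k$ and over all pairs $(i,j)$.

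For the scalar statement the quickest route is a one-line computation. Writing $p=p_0+p_1\mathbf{i}+p_2\mathbf{j}+p_3\mathbf{k}$ and likewise for $q$, one reads off $Re(pq)=p_0q_0-p_1q_1-p_2q_2-p_3q_3$; replacing $p_\ell,q_\ell$ by $-p_\ell,-q_\ell$ for $\ell=1,2,3$—which is precisely the effect of conjugating \emph{both} factors—leaves this expression unchanged, so $Re(\bar p\bar q)=Re(pq)$. Alternatively, more in the spirit of the algebraic identities collected above, one can argue $Re(\bar p\bar q)=Re(\overline{qp})=Re(qp)=Re(pq)$, using the order-reversing rule $\overline{xy}=\bar y\bar x$, the fact that conjugation fixes the real part, and the symmetry $Re(xy)=Re(yx)$ (itself immediate from the component formula). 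Either way, combining with the entrywise reduction yields $Re(\jz{A}\jz{B})=Re(\bar{\jz{A}}\bar{\jz{B}})$.

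I do not expect a genuine obstacle here; the statement is essentially a bookkeeping exercise. The only point that needs care is the order reversal under conjugation, $\overline{xy}=\bar y\bar x$, so that one is not tempted to write the false identity $\bar{\jz{A}}\bar{\jz{B}}=\overline{\jz{A}\jz{B}}$ for quaternion matrices; once the problem is pushed down to scalars the rest is a direct check on the four real components.
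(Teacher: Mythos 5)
Your proof is correct and complete. The paper itself offers no argument for this proposition (it is stated with only the remark ``It is also easy to see that''), so your write-up simply supplies the omitted verification: the entrywise reduction via $\mathbb{R}$-linearity of $Re(\cdot)$ is sound, and the scalar identity $Re(pq)=Re(\bar p\bar q)$ follows either from the component formula $Re(pq)=p_0q_0-p_1q_1-p_2q_2-p_3q_3$ (which is invariant under negating all imaginary parts of both factors, and is also symmetric in $p$ and $q$) or from the chain $Re(\bar p\bar q)=Re(\overline{qp})=Re(qp)=Re(pq)$. Your explicit caution that $\bar{\jz{A}}\bar{\jz{B}}\neq\overline{\jz{A}\jz{B}}$ in general is exactly the trap flagged by the paper's own Proposition on conjugation of products, so the argument is well calibrated to the one genuinely non-trivial point.
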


\color{black}

A square quaternion matrix $\jz U$ is called unitary if $\jz U^H\jz U = \jz{I}$, where $\jz{I}$ is the identity matrix of a proper size. For unitary matrices, similar to the real case,   the following holds:
\begin{proposition}
    \label{lem:kron_unitary}
    Let $\jz U,\jz V$ be two unitary quaternion matrices. Then their Kronecker product $\jz U\otimes \jz V$ is still unitary.
\end{proposition}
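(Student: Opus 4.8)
The plan is to factor $\jz U\otimes\jz V$ as a product of two simpler Kronecker products and then invoke the fact that a product of unitary quaternion matrices is unitary. The reason for not proceeding as in the real or complex case is that both of the ``obvious'' identities, $(\jz U\otimes\jz V)^H=\jz U^H\otimes\jz V^H$ and $(\jz A\otimes\jz B)(\jz C\otimes\jz D)=(\jz A\jz C)\otimes(\jz B\jz D)$, fail in the quaternion setting: the $(j,i)$ block of $(\jz U\otimes\jz V)^H$ is $(u_{ij}\jz V)^H=\jz V^H\overline{u_{ij}}$, with the quaternion scalar $\overline{u_{ij}}$ landing on the \emph{right} of $\jz V^H$, and a quaternion scalar does not commute with a quaternion matrix. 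Writing $\jz U\in\quat{m\times m}$ and $\jz V\in\quat{n\times n}$, I would instead start from the factorization $\jz U\otimes\jz V=(\jz U\otimes\jz I_n)(\jz I_m\otimes\jz V)$, which is verified by a one-line block computation and --- crucially --- involves only scalar multiples of the identity matrix, with which quaternion scalars \emph{do} commute.

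Next I would check that each factor is unitary. The matrix $\jz I_m\otimes\jz V$ is block-diagonal with $\jz V$ on every diagonal block, hence its conjugate transpose is block-diagonal with $\jz V^H$, and the product in either order is block-diagonal with $\jz V^H\jz V=\jz I_n$ (equivalently $\jz V\jz V^H=\jz I_n$) on each block, i.e.\ $\jz I_{mn}$. For $\jz U\otimes\jz I_n$, note that $(u_{ij}\jz I_n)^H=\overline{u_{ij}}\jz I_n$ is a genuine scalar-on-the-left expression, so here the shortcut is legitimate: $(\jz U\otimes\jz I_n)^H=\jz U^H\otimes\jz I_n$, and $(\jz A\otimes\jz I_n)(\jz B\otimes\jz I_n)=(\jz A\jz B)\otimes\jz I_n$ also holds (only scalar multiples of $\jz I_n$ ever appear in the blocks). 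Hence $(\jz U\otimes\jz I_n)^H(\jz U\otimes\jz I_n)=(\jz U^H\jz U)\otimes\jz I_n=\jz I_{mn}$, and likewise in the other order, so $\jz U\otimes\jz I_n$ is unitary.

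Finally, a product of two unitary quaternion matrices is unitary: with $\jz A=\jz U\otimes\jz I_n$ and $\jz B=\jz I_m\otimes\jz V$, the identity $(\jz P\jz Q)^H=\jz Q^H\jz P^H$ (which does hold for quaternion matrices) gives $(\jz A\jz B)^H(\jz A\jz B)=\jz B^H\jz A^H\jz A\jz B=\jz B^H\jz B=\jz I_{mn}$ and symmetrically $(\jz A\jz B)(\jz A\jz B)^H=\jz I_{mn}$, so $\jz U\otimes\jz V=\jz A\jz B$ is unitary. I expect the main obstacle to be conceptual rather than computational --- recognizing at the outset that the two standard Kronecker identities are false over $\mathbb{H}$, and arranging the argument so that the non-commutativity is confined to factors in which one side is a scalar multiple of the identity. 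If one instead wants a single direct computation, the alternative is to expand the $(a,a')$ block of $(\jz U\otimes\jz V)^H(\jz U\otimes\jz V)$ as $\sum_i\jz V^H(\overline{u_{ia}}u_{ia'})\jz V$ and observe that, entry by entry, the scalar $\overline{u_{ia}}u_{ia'}$ can be pulled through the sum over $i$ (the surrounding entries of $\jz V^H$ and $\jz V$ do not depend on $i$), which collapses the block to $\delta_{aa'}\jz V^H\jz V=\delta_{aa'}\jz I_n$ by unitarity of $\jz U$ and $\jz V$.
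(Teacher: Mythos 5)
Your proof is correct. Note that the paper itself offers no argument for this proposition beyond the remark that it ``is not difficult to verify,'' so there is no written proof to compare against; what you have supplied is a legitimate filling-in of that gap. Your argument correctly identifies the real subtlety: over $\mathbb{H}$ the identities $(\jz U\otimes\jz V)^H=\jz U^H\otimes\jz V^H$ and $(\jz A\otimes\jz B)(\jz C\otimes\jz D)=(\jz A\jz C)\otimes(\jz B\jz D)$ both fail, so the one-line real/complex proof does not transfer. The factorization $\jz U\otimes\jz V=(\jz U\otimes\jz I_n)(\jz I_m\otimes\jz V)$ is the right device, and it is worth emphasizing (as you implicitly do) that the \emph{order} of the two factors matters here: the reverse product $(\jz I_m\otimes\jz V)(\jz U\otimes\jz I_n)$ has $(i,k)$ block $\jz V u_{ik}$ rather than $u_{ik}\jz V$, so only the order you chose reproduces $\jz U\otimes\jz V$. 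Each factor is unitary for the reasons you give, and the product of unitary quaternion matrices is unitary via $(\jz P\jz Q)^H=\jz Q^H\jz P^H$ (the paper's Proposition 2.1). Your closing direct computation is also sound: in $\sum_i\jz V^H\,\overline{u_{ia}}u_{ia'}\,\jz V$ one only needs additivity to collect $\sum_i\overline{u_{ia}}u_{ia'}=\delta_{aa'}$ between the fixed outer factors, never commutation of a quaternion scalar past $\jz V$; since $\delta_{aa'}$ is real it then commutes out, giving $\delta_{aa'}\jz I_n$.
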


However, if $\jz U$ is a unitary quaternion matrix, then its transpose $\jz U^T$ may not be. 

  Singular value decomposition of  a quaternion matrix  is given as follows:
\begin{theorem}[Quaternion SVD \cite{zhang1997quaternions}]
    Let $\jz{M}\in \quat{I_1\times I_2}$ ($I_1\leq I_2$) be of rank $r$. Then  there exist a unitary quaternion matrix
    $\jz{U}\in \quat{I_1\times I_1}$ and a partially orthonormal $\jz{V}\in \quat{I_2\times I_1}$ with $\jz{V}^H\jz{V}=\jz{I}$ such that 
     $  \jz{M} =\jz{U}\Sigma\jz{V}^H  $
    where $\Sigma=diag(\sigma_1,\dots,\sigma_r,\overbrace{0,\ldots,0}^{I_1-r})$ is a   diagonal matrix with   $\sigma_k$'s being positive singular values of   $\jz{M}$.
\end{theorem}

   Quaternion tensors are defined as follows. 
\begin{definition}[Quaternion tensors]
    An $N$-th order tensor is  called a quaternion tensor  if its entries are quaternions, i.e.,
    \[\zl{T}=(t_{i_1i_2\cdots i_N})\in \quat{I_1\times I_2\times \cdots \times I_N}=\zl{T}_a+\zl{T}_b\bf{i}+\zl{T}_c\bf{j}+\zl{T}_d\bf{k} \]
    with $\zl{T}_a,\zl{T}_b,\zl{T}_c,\zl{T}_d\in \mathbb{R}^{I_1\times I_2\times \cdots \times I_N}$. $\zl{T}$ is a pure tensor if $\zl{T}_a$ is zero.
\end{definition}
We     present the   definitions of left and right inner product and orthogonality for quaternion tensors.

\begin{definition}[Inner product] \label{def:left_right_inner_prod}
    The left and right inner products of two quaternion tensors $\zl{A},\zl{B}\in\quat{I_1\times I_2\times \cdots \times I_N}$ are respectively defined as
    \begin{align*}
       & \langle \zl{A} , \zl{B} \rangle_L= \sum_{i_1=1}^{I_1}\sum_{i_2=1}^{I_2}\cdots \sum_{i_N=1}^{I_N} a_{i_1i_2\cdots i_N}\bar{b}_{i_1i_2\cdots i_N} ; ~~~~
        \langle \zl{A} , \zl{B} \rangle_R= \sum_{i_1=1}^{I_1}\sum_{i_2=1}^{I_2}\cdots \sum_{i_N=1}^{I_N} \bar{a}_{i_1i_2\cdots i_N}b_{i_1i_2\cdots i_N}.
    \end{align*}
\end{definition}
Here,  the right inner product coincides with the (standard) quaternion matrix inner product;  see, e.g., \cite{chen2022color,qi2022quaternion}, and its   conjugation coincides with the quaternion vector inner product  \cite[Def. 3.1.2]{rodman2014topics}.    
It is not hard to see that $\widebar{\langle \zl A,\zl B\rangle_L} = \langle \zl B,\zl A\rangle_L$ and $\widebar{\langle \zl A,\zl B\rangle_R} = \langle \zl B,\zl A\rangle_R$.

\begin{definition}[Orthogonality]\label{def:orthogonality}
 For $\zl{A},\zl{B}\in\quat{I_1\times I_2\times \cdots \times I_N}$, if $ \langle \zl{A} , \zl{B} \rangle_L=0$, then we say that   they are left orthogonal; if $\langle \zl{A} , \zl{B} \rangle_R=0$, then we say that they are right orthogonal; if $  Re\langle \zl{A} , \zl{B} \rangle_R =0$, then we say that they are weakly orthogonal. 
\end{definition}
The right orthogonality coincides with the notion of the standard orthogonality of   quaternion vectors \cite{zhang1997quaternions,rodman2014topics} in the usual sense. On the other hand, one always has for any $\zl A,\zl B$ of the same order, 
\begin{align}\label{eq:real_inn_prod_equal}Re\langle \zl{A} , \zl{B} \rangle_L = Re\langle \zl{A} , \zl{B} \rangle_R = Re\langle \zl{B} , \zl{A} \rangle_L = Re\langle \zl{B} , \zl{A} \rangle_R.\end{align}
Thus if any of them is zero, then $\zl A$ and $\zl B$ are weakly orthogonal.

     The Frobenius norm of quaternion tensors   is given by 
     $\|\zl{A}\|_F^2=\langle \zl{A}, \zl{A} \rangle_L=\langle \zl{A}, \zl{A} \rangle_R  $.

\section{Left and Right Tensor-Matrix Basic Operations}\label{tensor}
As   quaternion multiplications are non-commutative, several   concepts  such as eigenvalues, matrix products, and Fourier transforms,   have   left and right counterparts \cite{zhang1997quaternions,schulz2014using,miron2023quaternions}. 
In this section, for quaternion tensor-matrix operations, we   introduce  their left/right   unfoldings,       products, and some necessary   properties for later use. 

\subsection{Left and right mode-$k$ unfoldings}

The mode-$k$ unfolding of a real tensor is given by arranging the mode-$k$ fibers to the columns of the resulting matrix \cite{kolda2009tensor}. This definition has  been   extended to  left mode-$k$ unfolding of a quaternion tensor  \cite{miao2020low} without the prefix     ``left''.  Left and right tensor mode-$k$ unfolding are given explicitly as follows.
\begin{definition}[Left and right mode-$k$ unfoldings]
    For   $\zl{T}\in \quat{I_1\times I_2\times \cdots \times I_N}$, its left mode-$k$ unfolding is denoted as $\jz{T}^L_{[k]}$  
    by arranging the mode-$k$ fibers to   the columns of the resulting quaternion matrix, i.e.,
    \[\jz{T}^L_{[k]}\in \quat{I_k \times I_1\cdots I_{k-1}I_{k+1}\cdots I_{N}}.\]
    The $(i_1,i_2,\dots,i_N)$-th entry of $\zl{T}$ maps to the  $(i_k,j)$-th entry of $\jz{T}^L_{[k]}$, where
    \begin{equation}\label{eq:def:l_unfolding:1}j=1+\sum_{\substack{l=1 \\l\neq k}}^{N}\nolimits(i_l-1)J_l\qquad \text{with}\qquad J_l=
    \prod \nolimits_{\substack{m=1 \\ m\neq k}}^{l-1}I_m.\end{equation}
The right mode-$k$ unfolding of $\zl T$, denoted as $\jz{T}^R_{[k]}$, is given by   the transpose of $\jz{T}^L_{[k]}$:
\[
 \jz{T}^R_{[k]}:=(\jz{T}^L_{[k]})^T\in \quat{I_1\cdots I_{k-1}I_{k+1}\cdots I_{N}\times I_k}. 
\]
    
    \label{lmu}
\end{definition}
 
The left unfolding coincides with the real-case unfolding. On the other hand, although the right unfolding is merely the transpose of the left one, keeping them distinct simplifies subsequent computation and analysis.


 For a quaternion matrix $\jz A$, it holds that $\jz A= \jz A^L_{[1]} = \jz A^R_{[2]}$.

\subsection{Left and right  mode-$k$ products}

Recall that for a real tensor $\mathcal T\in \mathbb R^{I_1\times \cdots \times I_N}$ and a real matrix $\jz U\in \mathbb R^{J\times I_k}$, the mode-$k$ product of $\zl T$ and $\jz U$ is given by $\mathcal Y = \zl T\times_k \jz U$, with entries 
    \[y_{i_1\cdots i_{k-1}ji_{k+1}\cdots i_N}=\sum_{i_k=1}^{I_k}\nolimits t_{i_1i_2\cdots i_N}u_{ji_k} = \sum_{i_k=1}^{I_k}\nolimits  u_{ji_k} t_{i_1i_2\cdots i_N}.\]
When $\zl T$ and $\jz U$ are both quaternionic, due to the non-commutativity, in general 
\[\sum_{i_k=1}^{I_k}\nolimits t_{i_1i_2\dots i_N}u_{ji_k} \neq  \sum_{i_k=1}^{I_k}\nolimits u_{ji_k}t_{i_1i_2\cdots i_N}. \]
To resolve this issue for quaternion-matrix product, it is possible to define left and right mode-$k$ products respectively, depending on  the multiplication of the matrix from left or right. The concept of left and right quaternion tensor-matrix products was first introduced by Imhogiemhe et al. \cite{osimone2023low}   recently; however, for our purpose we need a somewhat different definition, and we will point out the differences later.
    \begin{definition}[Left and right mode-$k$ products]    \label{lproduct}
    The  left mode-$k$ product of   $\zl{T} \in \quat{I_1\times I_2\times \cdots \times I_N}$ with     $\jz{U}\in \quat{J\times I_k}$ is denoted with the notation $\times_k^L$ as
    \[\zl{Y}=\jz{U}\times_k^L\zl{T} \in \quat{I_1\times \cdots \times I_{k-1}\times J\times I_{k+1}\times \cdots \times I_N}\]
    with entries
    \[y_{i_1\cdots i_{k-1}ji_{k+1}\cdots i_N}=\sum_{i_k=1}^{I_k}\nolimits u_{ji_k}t_{i_1i_2\cdots i_N}.\]
    The right  mode-$k$ product of  $\zl{T}  $ with   $\jz{V}\in \quat{I_k\times J}$ is denoted with the notation $\times_k^R$ as
    \[\zl{Y}=\zl{T}\times_k^R \jz{V} \in \quat{I_1\times \cdots \times I_{k-1}\times J\times I_{k+1}\times \cdots \times I_N}\]
    with entries
    \begin{equation}\label{eq:def:lr_prod:1}y_{i_1\cdots i_{k-1}ji_{k+1}\cdots i_N}=\sum_{i_k=1}^{I_k}\nolimits t_{i_1i_2\cdots i_N}v_{i_kj}.\end{equation}
\end{definition}

The left mode-$k$ product,     defined in \cite{miao2023quat} without the prefix ``left'' (\cite{miao2023quat}   used the conventional notation $\times_k$), coincides with its real counterpart.  The right product is slightly different from its real counterpart: note that
\eqref{eq:def:lr_prod:1} sums the entries of $t_{i_1i_2\cdots i_N}v_{i_kj}$ by running over all the row indices of $v_{i_k j}$ instead of its column indices\footnote{We could have defined the right product by the conventional summation in the real setting  $y_{i_1\cdots j \cdots i_N}=\sum_{i_k=1}^{I_k}t_{i_1i_2\cdots i_N}v_{ji_k}$. If this is the case, then successively multiplying two matrices of proper  sizes on the same mode  would  yield  $\zl T\times_k^R\jz U\times_k^R \jz V = \zl T\times_k^R \jz W$, where $\jz W$ has entries $w_{ji} = \sum_k u_{ki}v_{jk} $, i.e., $\jz{W}$ cannot be computed by the usual matrix products. In fact, \cite{schulz2014using,osimone2023low} defined $\jz W$ as the right quaternion product   $\jz W = \jz V\cdot_R\jz U$, which does not share the ordinary algebraic properties of the standard product  $\jz U\jz V$.  By contrast,   adopting definition \eqref{eq:def:lr_prod:1} yields $\zl Y = \zl T\times_k^R (\jz U\jz V)$ (see Proposition \ref{left}),   which is more convenient in the computation and analysis.    
}.

The left and right products are consistent with the matrix products, i.e.,
given $\jz U\in\quat{I\times J }, \jz S\in \quat{J\times K}, \jz V\in \quat{K\times L}$, one has 
$ 
\jz U\times_1^L \jz S\times_2^R\jz V = \jz U\jz S\jz V.
$

The symbols $\times_k^L$ and $\times_k^R$ are taken from  \cite{osimone2023low}.  We point out the differences between Definition \ref{lproduct} and those in \cite{osimone2023low}. \cite{osimone2023low} defined three types of left and right mode-$k$ products:  
\begin{itemize}
    \item left mode-$1$ product: $\bigxiaokuohao{\zl T \times_1^L \jz U  }_{j i_2\cdots i_N} = \sum^{I_1}_{i_1=1}u_{j i_1}t_{i_1 i_2\cdots i_N}$ for $\jz U\in \quat{J\times I_1}$;
    \item right mode-$N$ product: $\bigxiaokuohao{ \zl{T}\!\times_N^R \jz U }_{i_1\cdots i_{N-1}j }\!\! = \!\sum^{I_N}_{i_N=1} t_{i_1\cdots i_N} u_{j i_N}  $ for $\jz U\in \quat{J\times I_N}$;
    \item mode-$d$ ($2\leq d\leq N-1$) product of a real matrix: $\bigxiaokuohao{ \zl T\times_d \jz U }_{i_1\cdots j\cdots i_N} = \sum^{I_d}_{i_d=1}t_{i_1\cdots i_d\cdots i_N}u_{j i_d} $ for $U\in \mathbb R^{J\times I_d}$.
\end{itemize}
The differences are:
\begin{itemize}
    \item the left and right products of \cite{osimone2023low} are    respectively defined for mode-$1$ and   mode-$N$, while ours are defined for any mode. The right product is different: as explained in the footnote, we adopt definition \eqref{eq:def:lr_prod:1}. 
    \item for mode-$d$ $(2\leq d\leq N-1)$ products, \cite{osimone2023low} requires a real matrix $\jz U$, while ours can be quaternion.
\end{itemize}

The following basic properties of left and right products can be directly verified; the properties of left products have been summarized in \cite{miao2023quat}.
\begin{proposition} For quaternion tensor $\zl T$ and quaternion matrix $\jz U$ of proper sizes:
    \begin{enumerate}
        \item Relations between left/right mode-$k$ products and their unfoldings: 
             \[\zl{Y}=\jz{U}\times_k^L\zl{T}\Leftrightarrow \jz{Y}_{[k]}^L=\jz{U}\jz{T}_{[k]}^L~{\rm and}~\zl{Y}=\zl{T}\times_k^R\jz{V}\Leftrightarrow \jz{Y}^R_{[k]}=\jz{T}_{[k]}^R\jz{V}.\]
        \item Mode-$k$ product  of matrices of proper sizes on the same mode:  
              \[ \jz{U}_2\times_k^L\jz{U}_1 \times_k^L\zl{T} =(\jz{U}_2\jz{U}_1)\times_k^L\zl{T}~{\rm and}~\zl{T}\times_k^R \jz{V}_1\times_k^R \jz{V}_2= \zl{T}\times_k^R (\jz{V}_1\jz{V}_2).\]
        \item Mode-$k$ products of matrices on different modes: if $k_1\neq k_2$, in general they do not commute:
              \[\jz{U}_{k_2}\times_{k_2}^L\jz{U}_{k_1}\times_{k_1}^L\zl{T} \neq \jz{U}_{k_1} \times_{k_1}^L\jz{U}_{k_2}\times_{k_2}^L\zl{T}~{\rm and}~\zl{T}\times_{k_1}^R \jz{V}_{k_1}\times_{k_2}^R \jz{V}_{k_2}\neq \zl{T}\times_{k_2}^R \jz{V}_{k_2}\times_{k_1}^R \jz{V}_{k_1}. \]
    \end{enumerate}
    \label{left}
\end{proposition}

We end this section by presenting a lemma concerning unfolding of mode-$k$ products    by means of the   Kronecker product, which will be used in analyzing properties of quaternion HOSVD. Its proof is in \refsect{sec:proof_lem_k_mode_right_unfolding}. 

\color{black}
\begin{lemma}\label{lem:mode-k_unfolding}
  Let $\zl{T}\in \mathbb{H}^{I_1\times I_2\times \cdots \times I_N}$ be an $N$-th order quaternion tensor. Let $\jz{U}_i\in \mathbb{H}^{J_i\times I_i}$, $i = 1,\ldots,k$ be quaternion matrices with \color{black} $k<N$,\color{black} and $\jz{I}^{(t)} \in \mathbb{R}^{I_t \times I_t}$ denote the identity matrix   for mode-$t$.  If $\zl{Y}= \jz{U}_k\times^L_k \jz{U}_{k-1}\times^L_{k-1}  \cdots  \jz{U}_1\times^L_1 \zl{T}$. Then for any $N\geq j>k$, the right mode-$j$ unfolding of $\zl{Y}$ is given by
    \begin{equation}
        \jz{Y}^L_{[j]}=\bigxiaokuohao{(\jz{I}^{(N)} \otimes \cdots \otimes \jz{I}^{(j+1)} \otimes \jz{I}^{(j-1)} \otimes \cdots \otimes  \jz{I}^{(k+1)} \otimes \jz{U}_{k} \otimes \cdots \otimes \jz{U}_{1})(\jz{T}^L_{[j]})^T}^T;
        \label{equ:right_j>k}
    \end{equation}
in particular    when $j=k$, we have
    \begin{equation}\label{equ:left_mode_k_product}
        \jz{Y}^L_{[j]}=\jz{U}_j \bigxiaokuohao{(\jz{I}^{(N)} \otimes \cdots \otimes \jz{I}^{(j+1)} \otimes \jz{I}^{(j-1)} \otimes \cdots \otimes  \jz{I}^{(k+1)} \otimes \jz{U}_{k} \otimes \cdots \otimes \jz{U}_{1})(\jz{T}^L_{[j]})^T}^T.
    \end{equation}
    Similarly, if $\zl{Y}= \zl{T} \times^R_N \jz{V}_N \cdots \times^R_k \jz{V}_k$, where $k> 1$, and $\mathbf{V}_i \in \mathbb{H}^{I_i \times J_i}$ for $i = k, \ldots, N$. Then for any $k>j\geq 1$ we have
    \begin{equation*}
        \jz{Y}^R_{[j]}=\bigxiaokuohao{(\jz{T}^R_{[j]})^T(\jz{V}_N \otimes \cdots \otimes \jz{V}_{k} \otimes \jz{I}^{(k-1)} \otimes \cdots \otimes  \jz{I}^{(j+1)} \otimes \jz{I}^{(j-1)} \otimes \cdots \otimes \jz{I}^{(1)})}^T;
    \end{equation*}
   when $j=k$ we have
    \begin{equation}\label{equ:right_mode_k_product}
        \jz{Y}^R_{[j]}=\bigxiaokuohao{(\jz{T}^R_{[j]})^T(\jz{V}_N \otimes \cdots \otimes \jz{V}_{k} \otimes \jz{I}^{(k-1)} \otimes \cdots \otimes  \jz{I}^{(j+1)} \otimes \jz{I}^{(j-1)} \otimes \cdots \otimes \jz{I}^{(1)})}^T\jz{V}_j.
    \end{equation}
\end{lemma}

When the tensor and matrices are real, \eqref{equ:left_mode_k_product} reduces to
\color{black}
\[\jz{Y}^L_{[j]}=\jz{U}_j \jz{T}^L_{[j]}(\jz{I}^{(N)} \otimes \cdots \otimes \jz{I}^{(j+1)} \otimes \jz{U}_{j-1} \otimes \cdots \otimes \jz{U}_{1})^T,\]
coinciding with its real counterpart  \cite{kolda2009tensor}. Similarly, \eqref{equ:right_mode_k_product} reduces to
 $\jz{Y}^R_{[j]}=(\jz{V}_N \otimes \cdots \otimes \jz{V}_{j+1} \otimes \jz{I}^{(j-1)} \otimes \cdots \otimes \jz{I}^{(1)})^T\jz{T}^R_{[j]}{\jz V}_j  $.

\color{black}

\section{A Parallelizable Quaternion HOSVD}\label{decomposition}
\color{black}
This section  first shortly reviews   HOSVD, sequential HOSVD, and     the quaternion HOSVD proposed in \cite{miao2023quat}  (L-QHOSVD). Then            the parallelizable quaternion HOSVD is introduced, with     ordering and all-orthogonality properties being studied.  Finally,  an essential difference between L-QHOSVD and the proposed QHOSVD in the matrix case is pointed out.  
\subsection{HOSVD}
HOSVD was first introduced by De Lathauwer et al. \cite{hosvd}, along with theoretical guarantees that the core tensor satisfies both the ordering and all-orthogonality properties for its subtensors.     
\begin{theorem}[HOSVD \cite{hosvd}]
    Every complex tensor $\zl{T}\in \mathbb{C}^{I_1\times I_2\times \cdots \times I_N}$ can be written as  
     ${\zl T} = {\zl S}\times_1 {\jz U}_1\times_2 {\jz U}_2 \cdots \times_N {\jz U}_N   $,
    with ${\jz U}_n\in \mathbb{C}^{I_n\times I_n}(n = 1,\dots ,N)$ being  unitary   and ${\zl S}\in \mathbb{C}^{I_1\times I_2\times \cdots \times I_N}$. The subtensors ${\zl S}_{i_n} = \alpha$, obtained by fixing the $n$-th index to $\alpha$, have properties of 
    \begin{enumerate}
        \item ordering for all possible values of n:  $\|{\zl S}_{i_n = 1}\|_F \geq \|{\zl S}_{i_n = 2}\|_F \geq \cdots \geq \|{\zl S}_{i_n = I_n}\|_F   $;
        \item all-orthogonality:  
         ${\zl S}_{i_n = \alpha}$ and ${\zl S}_{i_n = \beta}$ are orthogonal for all possible values of $n$, $\alpha$ and $\beta$ subject to $\alpha \neq \beta$:\[\langle {\zl S}_{i_n = \alpha},{\zl S}_{i_n = \alpha}\rangle = 0\quad {\rm when}\quad \alpha \neq \beta. \]
    \end{enumerate} 
\end{theorem}

  (Truncated) HOSVD is given in Algorithm \ref{algo:hosvd}. When $I_k^\prime = I_k,k = 1,\ldots,N$, it is the exact decomposition.   Vannieuwenhoven et al. \cite{vannieuwenhoven2012new} proposed a sequential variant of HOSVD (Algorithm \ref{algo:sequential_hosvd}), aiming at accelerating the computation of the truncated HOSVD. The reconstruction error of truncated HOSVD can be bounded by its tail energy. Specifically, let $\hat{\mathcal S},\hat{\jz{U}_k}$ be generated By Algorithm \ref{algo:hosvd}. Then \cite[Property 10]{hosvd}:
 \begin{align}
    \label{eq:error_hosvd_tail_energy}
\|\mathcal T - \hat{\zl{S}}\times_1 \hat{\jz{U}}_1\times_2 \hat{\jz{U}}_2 \cdots \times_N \hat{\jz{U}}_N\|_F^2 \leq \sum_{i_1 = I_1^\prime + 1}^{I_1}\nolimits \sigma_{i_1}^2(\jz{T}_{[1]} ) + \cdots +  \sum_{i_N = I_N^\prime + 1}^{I_N}\nolimits \sigma_{i_N}^2(\jz{T}_{[N]}),
 \end{align}
 where $\sigma_{i}(\cdot)$ denotes the $i$-th singular value of the   matrix. Sequential HOSVD has a similar error bound.
 \begin{small}
 \begin{algorithm} 
    \small
    \caption{HOSVD \cite{hosvd}}
    \begin{algorithmic}[1]
        \REQUIRE{ $\zl{T}\in \mathbb{C}^{I_1\times I_2\times\cdots \times I_N}$, truncated rank $(I_1',I_2',\dots ,I_N')$.}
        \ENSURE{$\hat{\zl S}$, $\{\hat{\jz U}_1, \hat{\jz U}_2, \cdots ,\hat{\jz U}_N\}.$}
    \FOR{$k = 1:N$}
        \STATE{$\hat{\jz U}_k \in\mathbb C^{I_k\times I_k^\prime}\leftarrow $ left leading $I_k'$ singular vectors of ${\jz T}_{[k]}$~~~~~~{\color{lightgray} \% ${\jz T}_{[k]}$ is the mode-$k$ unfolding of $\zl T$}}
    \ENDFOR
    \STATE{$\hat{\zl S} \in\mathbb C^{I_1^\prime\times \cdots \times I_N^\prime} \leftarrow {\zl T}\times_1 \hat{\jz U}_1^H\times_2 \hat{\jz U}_2^H\cdots \times_N \hat{\jz U}_N^H$}
    \end{algorithmic}
    \label{algo:hosvd}
 \end{algorithm}
\end{small}
\begin{small}
 \begin{algorithm}
    \small
    \caption{Sequential HOSVD \cite{vannieuwenhoven2012new}}
    \begin{algorithmic}[1]
        \REQUIRE{ $\zl{T}\in \mathbb{C}^{I_1\times I_2\times\cdots \times I_N}$, truncated rank $(I_1',I_2',\dots ,I_N')$.}
        \ENSURE{$\hat{\zl T}$, $\{\hat{\jz U}_1, \hat{\jz U}_2, \cdots ,\hat{\jz U}_N\}.$}
    \STATE{$\hat{\zl T} \leftarrow {\zl T}$}
    \FOR{$k = 1:N$}
        \STATE{$\hat{\jz U}_k\in\mathbb C^{I_k\times I_k^\prime} \leftarrow $   left leading $I_k'$ singular vectors of $\hat{\jz T}_{[k]}$~~~~~~{\color{lightgray} \% $\hat{\jz T}_{[k]}$ is the mode-$k$ unfolding of $\hat{\zl T}$}}
        \STATE{$\hat{\zl T} \in\mathbb C^{I_1^\prime\times \cdots\times I_k^\prime \times I_{k+1}\times \cdots \times I_N}\leftarrow \hat{\zl T}\times_k \hat{\jz U}_k^H$}
    \ENDFOR
    \end{algorithmic}
    \label{algo:sequential_hosvd}
 \end{algorithm}
\end{small}

\subsection{L-QHOSVD} \label{sec:lqhosvd}
\cite{miao2023quat} introduces quaternion HOSVD (L-QHOSVD for short) that   is in the spirit  of    sequential HOSVD. Specifically, given $\zl T\in\quat{I_1\times\cdots\times I_N}$,  for   $k=N:-1:1$, L-QHOSVD computes the factor matrix $\jz U_k\in\quat{I_k\times I_k}$ from the left singular vectors of   the \emph{left} mode-$k$ unfolding  of $\zl T_{k+1}$: $ (\jz T_{k+1} )_{[k]}^L$, where the tensors $\zl T_k$'s are defined recursively as $\zl T_{k}:= \jz U_{k}^H \times_{k}^L \zl T_{k+1}$ with $\zl T_{N+1}:=\zl T$. The core tensor is given by $\zl S = \zl T_1$.  The following properties hold.  
\begin{theorem}[L-QHOSVD \cite{miao2023quat}]
      Let $\zl{T}\in \quat{I_1\times I_2\times\cdots \times I_N}$;   let the unitary matrices $\jz{U}_k\in\quat{ I_k\times I_k}$ ($k=1,\ldots,N$) and the core tensor $\zl S=\jz U_1^H\times_1^L \zl T_2=\cdots = \jz U_1^H\times_1^L\cdots U_N^H\times_N^L \zl T\in\quat{I_1\times I_2\times\cdots \times I_N}$ be generated as above. 
    Then, $\zl T$ can be decomposed as
        $\zl{T}=\jz{U}_N \times_N^L\cdots  \jz{U}_2 \times_2^L \jz{U}_1\times_1^L\zl{S}$. 
      $\zl{S}\in \quat{I_1\times I_2\times \cdots \times I_N}$  admits the following properties:
    \begin{enumerate}[i)]
        \item Ordering:
        \[\|\zl{S}_{i_k=1}\|_F=(\sigma_1^L)^{(k)}\geq \|\zl{S}_{i_k=2}\|_F=(\sigma_2^L)^{(k)} \geq \cdots \geq \|\zl{S}_{i_k=I_k}\|_F=(\sigma_{I_k}^L)^{(k)}\geq 0\]
        for any $k=1,2,\dots ,N$,  where $\zl S_{i_k = \alpha}$ is the $(N-1)$-th order tensor obtained by fixing the $k$-th index of $\zl S$ to be $\alpha$, 
        and $(\sigma_i^L)^{(k)}$'s are the singular values of 
        $(\jz T_{k+1})^L_{[k]}$, arranged in a  descending order.
        \item Orthogonality: The left orthogonality is satisfied   in mode-$1$: 
         $\langle \zl{S}_{i_1=\alpha} , \zl{S}_{i_1=\beta} \rangle_L =0 \quad \text{when} \quad \alpha\neq \beta. $
    \end{enumerate}
    \label{L}
\end{theorem}

\begin{remark} \label{rmk:lqhosvd}
\cite{miao2023quat} did not consider the rank truncated L-QHOSVD as that in Algorithm \ref{algo:sequential_hosvd}. Instead,  it shrinks the core tensor \cite[Sect. 5.2]{miao2023quat}. Specifically,   once $\zl S\in \quat{I_1\times I_2\times\cdots \times I_N},\jz U_1\in\quat{I_1\times I_1},\ldots,\jz U_N \in\quat{I_N\times I_N}$ are obtained from the full decomposition of L-QHOSVD, then zero  entries of $\zl{S}$ whose norms exceed a given threshold to form $\hat{\zl{S}}$, 
 and finally reconstruct    $\hat{\zl T} = \jz U_N\times_N^L \cdots \jz U_1\times_1^L\hat{\zl S}$. 
\end{remark}

\subsection{The proposed QHOSVD}  Modern parallel computing architechture can accelerate computations. However, a straightforward quaternion-domain extension of Algorithm \ref{algo:hosvd} (HOSVD), which computes each factor matrix in parallel, runs into issues. This is illustrated via an   example.   
\begin{example}  \label{example:1} Consider   ${\zl T}\in \mathbb{H}^{3\times 3\times 3\times 3}$ with   entries given as $\zl{T}(a,b,c,d) = \frac{2}{(a-3) + b\mathbf{i} + c\mathbf{j} + d\mathbf{k}}$, where $a,b,c,d\in \{1,2,3\}$. Set truncated rank $(I_1^\prime,\!\ldots\!,I_4^\prime) = (2,2,2,2)$.   Denote  $\Sigma_k^L$ as the diagonal singular value matrix of the left mode-$k$ unfolding $\jz{T}^L_{[k]}$, $k=1,2,3,4$,    which are respectively listed as follows:
\begin{align*}
    \Sigma^L_1 = {\rm diag}(5.0109,0.4830,0.0910),~\Sigma^L_2 = {\rm diag}(5.0134,0.4621,0.0605),\\
    \Sigma^L_3 = {\rm diag}(5.0134,0.4621,0.0605),~\Sigma^L_4 = {\rm diag}(5.0134,0.4621,0.0605).
\end{align*}
Let $\hat{\zl T} = \hat{\jz{U}}_4\times^L_4 \cdots\hat{\jz{U}}_1\times^L_1\hat{\zl{S}} \in\quat{3\times 3\times 3\times 3}$ be the reconstructed tensor with $\hat{\zl{S}}~ {  and} ~\hat{\jz{U}_k}$'s being computed by a direct quaternion adaption of Algorithm \ref{algo:hosvd}. 
One can reproduce  the reconstructed error $\|\zl{T}-\hat{\zl T}\|_F^2=0.1166$. On the other hand,  the tail energy is $\sigma_3(\Sigma^L_1)^2+\sigma_3(\Sigma^L_2)^2+\sigma_3(\Sigma^L_3)^2+\sigma_3(\Sigma^L_4)^2=0.01926$. 
\end{example}

In fact, examples like this are not rare: 
  unlike the real truncated HOSVD bound \eqref{eq:error_hosvd_tail_energy}, the reconstruction error can far exceed the tail energy, i.e., a naive quaternion extension of Algorithm \ref{algo:hosvd} yields unstable truncation.

In view of the above limitation,  we consider an exact decomposition  of the following fashion in this section (its truncated version will be studied in the next section):
\[
\zl T= \jz{U}_m\times_m^L\cdots \jz{U}_{1}\times_{1}^L\zl{S} \times_N^R \jz{V}_N^H\cdots \times_{m+1}^R\jz{V}^H_{m+1},
\]
where $\zl{S}$ is the core tensor and $\jz{U}_k$'s and $\jz{V}_k$'s are factor matrices. 
To formally     present the idea, 
 we without loss of generality assume the order of the  tensor is $N=2m$, while  odd order tensors can be treated as even order tensors:  For instance,  a third-order tensor $\zl T \in \mathbb{H}^{m\times n\times p}$ can also be viewed as a four order tensor  $\zl T \in \mathbb{H}^{m\times n\times p\times 1}$.  

\begin{figure}[htbp]
    \centering
    \resizebox{\textwidth}{!}{
    \begin{tikzpicture}[
    node distance=2cm and 1.8cm,
    every node/.style={draw, minimum size=8mm, font=\sffamily},
    ->, >=Stealth
]

\node (A) {$\zl{T}\in \quat{I_1\times I_2\times\cdots \times I_N}$};
\node (B1) [right=1cm of A, yshift=1cm] {\quad compute ${\jz U}_m$ and ${\zl L}_m$\quad ~};
\node (Bn) [right=1cm of B1] {~~compute ${\jz U}_1$ and ${\zl L}_1$~~};
\node (C1) [right=1cm of A, yshift=-1cm] {compute ${\jz V}_{m+1}$ and ${\zl R}_{m+1}$};
\node (Cn) [right=1cm of C1] {compute ${\jz V}_{N}$ and ${\zl R}_{N}$};
\node (D)  [right=10cm of A] {Synthesize core tensor ${\zl S}$};
\node (E)  [right = 1cm of D] {$\zl{S}$, $\{\jz{U}_1,\cdots ,\jz{U}_m,\jz{V}_{m+1},\cdots ,\jz{V}_{N}\}.$};

\node at ($(A)+(0,1cm)$) {Input};
\node at ($(B1)+(2.5,1)$) {Processor 1};
\node at ($(C1)+(2.5,-1)$) {Processor 2};
\node at ($(E)+(0,1)$) {Output};

\draw[line width=1.2pt] (A) -- (B1);
\draw[line width=1.2pt] (A) -- (C1);
\draw[dotted, line width = 1.5pt] (B1) -- (Bn);
\draw[dotted, line width = 1.5pt] (C1) -- (Cn);
\draw[line width = 1.2pt] (Bn) -- (D);
\draw[line width = 1.2pt] (Cn) -- (D);
\draw[line width = 1.2pt] (D) -- (E);
\end{tikzpicture}
    }
    \captionsetup{aboveskip=-1pt, belowskip=-4pt}
    \caption{Parallel process of the proposed QHOSVD}     \label{fig:parallel_process}
\end{figure}

The decomposition procedure is illustrated  in Fig. \ref{fig:parallel_process}. The detail is    as follows: For  $\zl T \in \quat{I_1 \times \cdots \times I_N}$,   initialize $\zl L_{m+1} = \zl T = \zl R_m$, and simultaneously launch two independent recursive procedures on two   processors:
\begin{itemize}
    \item Processor 1: sequentially computes left singular vectors of $(\jz{L}_{i+1})^L_{[i]}$ to form left core tensors $\zl L_i$ for $i = m, \dots, 1$ in the spirit of sequential HOSVD.    $(\jz{L}_{i+1})^L_{[i]}$ is the left mode-$i$ unfolding of $\zl L_{i+1}$. 
    \item Processor 2: sequentially computes right singular vectors of $(\jz{R}_{j-1})^R_{[j]}$ to form   right core tensors $\zl R_j$ for $j = m+1, \dots, N$ in the spirit of sequential HOSVD.  $(\jz{R}_{j-1})^R_{[j]}$ is the right mode-$j$ unfolding of $\zl R_{j-1}$.
\end{itemize}
The recursion steps on each processor are presented below:
\begin{equation}
    \begin{split}
        &\qquad ({\rm Processor~1}) \qquad\qquad\qquad~~~ ({\rm Processor~2}) \\
        &\zl{L}_{m}  :=\jz{U}_m^H \times_m^L \zl{L}_{m+1} \qquad\quad~~ \zl{R}_{m+1}  :=\zl{R}_m \times_{m+1}^R \jz{V}_{m+1} \\
        &\zl{L}_{m-1}  :=\jz{U}_{m-1}^H \times_{m-1}^L \zl{L}_{m}   \quad~~\zl{R}_{m+2}  :=\zl{R}_{m+1} \times_{m+2}^R \jz{V}_{m+2}\\
         &\qquad \vdots  \qquad\qquad\qquad\qquad\qquad\qquad \vdots  \\
        &\zl{L}_1  :=\jz{U}_1^H\times_1^L \zl{L}_2   \qquad\qquad~~~~ \zl{R}_{N} :=\zl{R}_{N-1} \times_{N}^R \jz{V}_{N},
    \end{split}
    \label{eq:parallel_recursion}
\end{equation}
in which $\jz{U}_i\in \mathbb{H}^{I_i\times I_i}$ and $\jz{V}_j\in \mathbb{H}^{I_j\times I_j}$ are obtained from the SVD  of the left mode-$i$ unfoldings of $\zl L_{i+1}$ and right mode-$j$ unfoldings of $\zl R_{j-1}$, respectively, as follows:
\begin{equation}
\begin{split}
(\jz{L}_{i+1})^L_{[i]} = \jz{U}_i \Sigma^L_i \jz{V}_i^H\in \mathbb{H}^{I_i\times (\prod_{\substack{s = 1\\s\neq i}}^{N} I_s)}, \\
(\jz{R}_{j-1})^R_{[j]} = \jz{U}_j \Sigma^R_j \jz{V}_j^H\in \mathbb{H}^{(\prod_{\substack{s = 1\\s\neq j}}^{N} I_s)\times I_j},
\end{split}
\label{eq:svd_of_left_and_right_tensor}
\end{equation}
where $\Sigma^L_i$ and $\Sigma^R_j$ are the diagonal  singular value matrices, whose diagonal entries are arranged in descending order:
$$
(\sigma_1^L)^{(i)}, \dots, (\sigma_{I_i}^L)^{(i)}, \quad \text{and} \quad (\sigma_1^R)^{(j)}, \dots, (\sigma_{I_j}^R)^{(j)}.
$$
The final core tensor $\zl S$ is constructed by combining the two halves:
\begin{align} \label{eq:ts_qhosvd_core_tensor}
\zl{S} =& {\jz U}_1^H\times^L_1 \cdots {\jz U}_m^H\times^L_m {\zl R}_N = \zl{L}_1 \times^R_{m+1} \jz{V}_{m+1} \cdots \times^R_N \jz{V}_N \\
=& \jz{U}_1^H \times^L_1 \cdots \times^L_m \jz{U}_m^H \times \zl{T} \times^R_{m+1} \jz{V}_{m+1} \cdots \times^R_N \jz{V}_N.
\end{align}
The corresponding computational steps are detailed in Algorithm \ref{algo:ts}, denoted as TS-QHOSVD.

\color{black}
Based on the recursive process described in \eqref{eq:parallel_recursion}, the intermediate tensors $\zl L_i$ and $\zl R_j$ admit explicit representations through the original tensor $\zl T$ and the factor matrices $\jz{U}_i$, $\jz{V}_j$ as follows:
\begin{align}
    \zl{L}_{i} = \jz{U}_{i}^H \times^L_{i} \cdots \times^L_m \jz{U}_m^H \times^L_m \zl{T}, \quad
    \zl{R}_{j} = \zl{T} \times_{m+1}^R \jz{V}_{m+1} \cdots \times_{j}^R \jz{V}_{j}.
    \label{eq:relation_of_Li_Rj_and_T}
\end{align}
\color{black}
Now, the properties of TS-QHOSVD are presented as follows. Without loss of generality assume $N=2m$.


\begin{small}
 \begin{algorithm}
    \small
    \caption{Two-Sided QHOSVD (TS-QHOSVD)}
    \begin{algorithmic}[1]
        \REQUIRE{ $\zl{T}\in \quat{I_1\times I_2\times\cdots \times I_N}$, denote $\zl L_{m+1} = \zl T = \zl R_{m}$.}
        \ENSURE{$\zl{S}$, $\{\jz{U}_1,\cdots ,\jz{U}_m,\jz{V}_{m+1},\cdots ,\jz{V}_{N}\}.$}
    
    \FOR{$i=m:-1:1$}   
        \STATE{$\jz{U}_{i}\leftarrow $ left singular vectors of $(\jz{L}_{i+1})^L_{[i]}$~~~~~~~~~~~~~~~{\color{gray}\% on Processor 1; $(\jz{L}_{i+1})^L_{[i]}$ the left mode-$i$ unfolding of $\zl L_{i+1}$}}
        \STATE{$\zl{L}_{i}\leftarrow \jz{U}_{i}^H \times_k^L \zl{L}_{i+1}$}
    \ENDFOR
    \FOR{$j = m+1 : N$}
        \STATE{$\jz{V}_{j}\leftarrow $ right singular vectors of $(\jz{R}_{j-1})^R_{[j]}$~~~~~~~~~~~~~{\color{gray}\% on Processor 2; $(\jz{R}_{j-1})^R_{[j]}$ the right mode-$j$ unfolding of $\zl R_{j-1}$}}
        \STATE{$\zl{R}_j\leftarrow \zl{R}_{j-1}\times_{j}^R \jz{V}_{j}$}
    \ENDFOR
    \STATE{$\zl{S} \leftarrow {\jz U}_1^H\times^L_1 \cdots {\jz U}_m^H\times^L_m {\zl R}_N$}
    \end{algorithmic}
    \label{algo:ts}
 \end{algorithm}
\end{small}

\begin{theorem}[TS-QHOSVD]
    \label{two-sided}
        Let $\zl{T}\in \quat{I_1\times I_2\times\cdots \times I_N}$. Let the unitary matrices $\jz{U}_i \in\quat{I_i\times I_i}$, $\jz{V}_{j}\in\quat{I_{j}\times I_{j}  }$   ($i=m:-1:1,j = m+1:N$), and the core tensor $\zl S$ be generated by Algorithm \ref{algo:ts}. Then
    \begin{equation}
        \zl{T}=\jz{U}_m\times_m^L\cdots \jz{U}_{1}\times_{1}^L\zl{S} \times_N^R \jz{V}_N^H\cdots \times_{m+1}^R\jz{V}^H_{m+1}.
        \label{2}
    \end{equation}
The core tensor     $\zl{S}\in \quat{I_1\times I_2\times \cdots \times I_N}$ admits   the following properties:
    \begin{enumerate}[i)]
        \item Ordering:
        \begin{align*}
            &\|\zl{S}_{i_p=1}\|_F=(\sigma_1^L)^{(p)}\geq \|\zl{S}_{i_p=2}\|_F=(\sigma_2^L)^{(p)} \geq \cdots \geq \|\zl{S}_{i_p=I_p}\|_F=(\sigma_{I_p}^L)^{(p)}\geq 0; \\
            &\|\zl{S}_{i_q=1}\|_F=(\sigma_1^R)^{(q)}\geq \|\zl{S}_{i_q=2}\|_F=(\sigma_2^R)^{(q)} \geq \cdots \geq \|\zl{S}_{i_q=I_q}\|_F=(\sigma_{I_q}^R)^{(q)}\geq 0,
        \end{align*}
        for any   $p=1,\dots ,m$ and $q=m+1,\dots ,N$, where $\zl S_{i_k = \alpha}$ is the $(N-1)$-th order subtensor obtained by fixing the $k$-th index of $\zl S$ to be $\alpha$ $(k=1,2,\dots, N)$. $(\sigma_k^L)^{(p)}$'s are the singular values of 
        $(\jz L_{p+1})^L_{[p]}$, arranged in a descending order, and
        $(\sigma_k^R)^{(q)}$'s are the singular values of  
        $(\jz R_{q-1})^R_{[q]}$, 
        arranged in a descending order.
        \item Orthogonality: In mode-$1$, the left orthogonality is satisfied;   in mode-$N$, the right orthogonality is satisfied:
        \begin{align*}
            &\langle \zl{S}_{i_1=\alpha} , \zl{S}_{i_1=\beta} \rangle_L =0
            \quad \text{when} \quad \alpha\neq \beta,~~~~
            &\langle \zl{S}_{i_N=\alpha} , \zl{S}_{i_N=\beta} \rangle_R =0 
            \quad \text{when} \quad \alpha\neq \beta.
        \end{align*}
       \item Weak orthogonality: In all modes, the weak orthogonality is satisfied, i.e., for $k=1,\ldots,N$,
        \begin{align*}
            Re\langle \zl{S}_{i_k=\alpha}, \zl{S}_{i_k=\beta} \rangle_L = Re\langle \zl{S}_{i_k=\alpha}, \zl{S}_{i_k=\beta} \rangle_R=0 \quad {\rm when} \quad \alpha\neq \beta.
        \end{align*}
    \end{enumerate}
\end{theorem}
 
\begin{remark} \label{rmk:orthogonality_ts_qhosvd}  The left, right, and weak orthogonalities was given in Definition \ref{def:orthogonality}. Comparing with   L-QHOSVD whose orthogonality is only satisfied in mode-$1$,   the orthogonality of TS-QHOSVD is satisfies in   mode-$1$ and mode-$N$. In addition, the weak orthogonality  was not discovered for L-QHOSVD.  Note that when $\zl T$ is real, the weak orthogonality property above boils down exactly to the all-orthogonality property of HOSVD.
\end{remark}


\begin{proof}[Proof of Theorem \ref{two-sided}]
First,  the representation of $\zl{T}$ \eqref{2} in terms of $\zl S$,  $\jz U_i$'s, and $\jz V_j$'s can be simply obtained from the computation of $\zl{S}$ in \eqref{eq:ts_qhosvd_core_tensor} and a recursive computation. The remaining proof is organized as follows: We first show the ordering, orthogonality, and weak orthogonality for  $\zl{R}_N$, and then establish relations between $\zl{R}_N$ and  $\zl{S}$, such that the same properties also hold for $\zl{S}$.  

\emph{Ordering:} From \eqref{eq:relation_of_Li_Rj_and_T}, for any $m+1\leq j\leq N$,   $\zl{R}_{j-1}$ can be rewritten as: 
    \begin{align*} 
        \zl{R}_{j-1}=& \zl{T} \times_{m+1}^R \jz{V}_{m+1} \cdots \times_{j-1}^R \jz{V}_{j-1}
        = \zl{R}_j\times_j^R\jz{V}_j^H = \cdots =   \zl{R}_N \times_N^R \jz{V}^H_N \dots \times_{j}^R \jz{V}^H_{j}.
    \end{align*}
    Using Lemma~\ref{lem:mode-k_unfolding} \eqref{equ:right_mode_k_product}, we can left unfolding $\zl R_{j-1}$ along   mode-$j$ to obtain
    \begin{equation}\label{eq:tensor_R_j-1_right_unfold}
        \begin{split}
            (\jz{R}_{j-1})^R_{[j]}= (\zl{R}_N \times_N^R \jz{V}^H_N \dots \times_{j+1}^R \jz{V}^H_{j+1})^R_{[j]}\jz{V}^H_j
             = \bigxiaokuohao{((\jz R_{N})^R_{[j]})^T\jz{X}}^T\jz{V}^H_j,
        \end{split}
    \end{equation}
    where $\jz{X} = ({\jz V}_N^H\otimes \cdots \otimes {\jz V}_{j+1}^H \otimes \jz{I}^{(j-1)} \otimes \cdots \otimes \jz{I}^{(1)})$ is unitary by Proposition \ref{lem:kron_unitary} and that $\jz V_k$'s are all unitary.
    
    Comparing   \eqref{eq:svd_of_left_and_right_tensor}  and \refeq{eq:tensor_R_j-1_right_unfold} and using the unitarity of $\jz{V}_j$, we have 
    \[
        {\jz U}_j\Sigma_j^R = \bigxiaokuohao{((\jz R_{N})^R_{[j]})^T\jz{X}}^T.
    \]
    Taking transpose on both sides and using the unitarity of $\jz{X}$, we obtain 
  $
    ({\jz U}_j\Sigma_j^R)^T\jz{X}^H = ((\jz R_{N})^R_{[j]})^T.
   $
    Taking transpose again on both sides, we arrive at
    \[
     (\jz R_{N})^R_{[j]} = (({\jz U}_j\Sigma_j^R)^T\jz{X}^H)^T.
    \]
    which, together with   Proposition \ref{lem:real_quater_transpose_commutat} and the fact that  $\Sigma_j^R$ is a real matrix  gives 
    \begin{equation}\label{eq:sl_k}
        \begin{split}
         (\jz R_{N})^R_{[j]} = \bigxiaokuohao{ (\Sigma_j^R)^T\jz{U}_j^T\jz{X}^H       }^T=    (({\jz U}_j)^T{\jz X}^H)^T\Sigma_j^R.
    \end{split}
    \end{equation}
    
    Although $(({\jz U}_j)^T\jz{X}^H)^T$ are no longer unitary matrices,   the Frobenius norm of each row/column   is still unity as transpose does not change the norms; this together with \eqref{eq:sl_k} shows that the $\alpha$-th row of $     (\jz R_{N})^R_{[j]}$ satisfies
    \begin{align*}
        \|(\zl{R}_N)_{i_j = \alpha}\|= \|(\jz{R}_N)_{[j]}^R(:,\alpha)\|_F = \| (({\jz U}_j)^T{\jz X}^H)^T\Sigma_j^R(\alpha,:)\|_F = (\sigma_{\alpha}^R)^{(j)},
    \end{align*}
    where the third equality follows from that     $\Sigma_j^R$ is diagonal with the $\alpha$-th diagonal entry being the   $\alpha$-th largest singular value $(\sigma_{\alpha}^R)^{(j)}$ of $(\jz{R}_{j-1})_{[j]}^R$. 
    
    Combining the above discussions, we arrive at:
    \begin{align*}
        &\|({\zl R}_N)_{i_j=1}\|_F\!=\!(\sigma_1^R)^{(j)}\geq \|({\zl R}_N)_{i_j=2}\|_F\!=\!(\sigma_2^R)^{(j)} \!\geq\! \cdots \!\geq\! \|({\zl R}_N)_{i_j=I_j}\|_F\!=\!(\sigma_{I_j}^R)^{(j)}\geq 0,
    \end{align*}
    where $j=m+1,\dots ,N$. This proves the ordering property for $\zl{R}_N$.

    \emph{Orthogonality:}  From the relation between $\zl R_{N-1}$ and $\zl R_N$ in \eqref{eq:parallel_recursion}, we have 
    \[\zl{R}_{N}=\zl{R}_{N-1} \times^R_N \jz{V}_N \Leftrightarrow  \zl{R}_{N-1}=\zl{R}_{N } \times^R_N \jz{V}_N^H . \]
    Right unfolding the above equation  along mode-$N$ and using Lemma \ref{lem:mode-k_unfolding}    yield
$        (\jz{R}_{N-1})^R_{[N]}= (\jz{R}_N)^R_{[N]} \jz{V}_N^H$ .
     This in connection with \eqref{eq:svd_of_left_and_right_tensor} (taking SVD of $(\jz{R}_{N-1})^R_{[N]}$) leads to
     \begin{align*}
          \jz{U}_N \Sigma^R_N \jz{V}^H_N = (\jz{R}_{N-1})^R_{[N]} = (\jz{R}_N)^R_{[N]} \jz{V}_N^H .
     \end{align*}
     By the unitarity of $\jz{V}_N$, we have 
   $
        (\jz{R}_N)^R_{[N]} = \jz{U}_N \Sigma^R_N.
 $
    As $ \jz{U}_N$ is also unitary, taking into account  the definition of left and right inner products in Definition \ref{def:left_right_inner_prod}, 
    \begin{align*}
        \langle (\zl{R}_N)_{i_N=\alpha} , (\zl{R}_N)_{i_N=\beta} \rangle_R &= \langle (\jz{R}_N)^R_{[N]}(:,\alpha), (\jz{R}_N)^R_{[N]}(:,\beta) \rangle_R \\ &= ((\jz{R}_N)^R_{[N]}(:,\alpha))^H (\jz{R}_N)^R_{[N]}(:,\beta) \\
        &= \Sigma^R_N(:,\alpha)^H\cdot\Sigma^R_N(:,\beta) = 0,~~~~\alpha\neq \beta.
    \end{align*}
        This proves the orthogonality property of $\zl{R}_N$ along mode-$N$.

    \emph{Weak orthogonality:}  From \eqref{eq:sl_k} and Proposition \ref{lem:conj_real_conterpart} we   obtain that   for any $k= m+1,\dots,N$,
    \begin{small}
    \begin{align*}\small
        Re\langle (\zl{R}_N)_{i_k=\alpha}, (\zl{R}_N)_{i_k=\beta} \rangle_R &= Re\bigxiaokuohao{((\jz{R}_N)^R_{[k]}(\alpha,:))^H(\jz{R}_N)^R_{[k]}(\beta,:)} \\
        & = Re\bigxiaokuohao{((({\jz U}_k)^T\jz{X})^T\Sigma_k^R(\alpha,:))^H\cdot(({\jz U}_k)^T\jz{X})^T\Sigma_k^R(\beta,:)} \\
        & = Re\bigxiaokuohao{(\Sigma_k^R(\alpha,:))^T\overline{(({\jz U}_k)^T\jz{X})}\cdot(({\jz U}_k)^T\jz{X})^T\Sigma_k^R(\beta,:)} \\
        & = (\Sigma_k^R(\alpha,:))^TRe\bigxiaokuohao{\overline{(({\jz U}_k)^T\jz{X})}\cdot(({\jz U}_k)^T\jz{X})^T}\Sigma_k^R(\beta,:) \\
        & \overset{{\rm Prop. }~\ref{lem:conj_real_conterpart}}{=} (\Sigma_k^R(\alpha,:))^TRe\bigxiaokuohao{({\jz U}_k)^T\jz{X}\cdot(({\jz U}_k)^T\jz{X})^H}\Sigma_k^R(\beta,:) \\
        & \overset{\jz{X}~{\rm unitary}  }{=}  (\Sigma_k^R(\alpha,:))^TRe\bigxiaokuohao{{\jz U}_k^T\cdot\overline{\jz U}_k}\Sigma_k^R(\beta,:) \\
        & \overset{{\rm Prop. }~\ref{lem:conj_real_conterpart}}{=} (\Sigma_k^R(\alpha,:))^TRe(({\jz U}_k)^H{\jz U}_k))\Sigma_k^R(\beta,:) \\
        & = (\Sigma_k^R(\alpha,:))^T\Sigma_k^R(\beta,:))  = 0,~~~~\alpha\neq \beta,
    \end{align*}
    \end{small}
   where the fourth equality comes from $Re(ab) = aRe(b)$ for real $a$ and quaternion $b$. This    proves  the weak-orthogonality of subtensors of $\zl R_N$ along modes $m+1,\ldots,N$.

  The remaining task is to establish the relation between $\zl{R}_N$ and $\zl{S}$ such that the proved properties can be transferred to $\zl{S}$. 
    From \eqref{eq:ts_qhosvd_core_tensor}, we have 
    \begin{align*} 
        {\jz U}_m\times^L_m \cdots {\jz U}_1\times^L_1 \zl S= {\zl R}_N.
    \end{align*}
   Using Lemma \ref{lem:mode-k_unfolding} \eqref{equ:right_j>k}, unfolding ${\zl R}_N$ along mode-$j$ $(j>m)$ yields:
    \begin{align*}
        (\jz R_N)^R_{[j]} = ({\jz I}^{(N)}\otimes \cdots \otimes {\jz I}^{(j+1)} \otimes {\jz I}^{(j-1)} \otimes \cdots \otimes {\jz I}^{(m+1)} \otimes \jz{U}_m \otimes \cdots {\jz U}_1){\jz S}^R_{[j]}.
    \end{align*}
  Set ${\jz Z} = ({\jz I}^{(N)}\otimes \cdots \otimes {\jz I}^{(j+1)} \otimes {\jz I}^{(j-1)} \otimes \cdots \otimes {\jz I}^{(m+1)} \otimes \jz{U}_m \otimes \cdots {\jz U}_1)$, which is unitary  by Proposition \ref{lem:kron_unitary}. This together with the obtained properties for $\zl{R}_N$  implies the following characteristics for the core tensor $\zl{S}$:

    (1) Ordering: for  $j=m+1,\ldots,N$,
    \begin{align*}
        \|{\zl S}_{i_j = \alpha}\|_F = \|{\jz S}^R_{[j]}(:,\alpha)\|_F = \|\jz{Z}^H(\jz R_N)^R_{[j]}(:,\alpha)\|_F = \|(\jz R_N)^R_{[j]}(:,\alpha)\|_F = (\sigma_\alpha^R)^{(j)},
    \end{align*}
    and so
 $
        \|{\zl S}_{i_j=1}\|_F\!=\!(\sigma_1^R)^{(j)}\geq \| {\zl S}_{i_j=2}\|_F\!=\!(\sigma_2^R)^{(j)} \!\geq\! \cdots \!\geq\! \|{\zl S}_{i_j=I_j}\|_F\!=\!(\sigma_{I_j}^R)^{(j)}\geq 0 
 $.
    
    (2) Orthogonality:
    \begin{align*}
        \langle (\zl{S})_{i_N=\alpha} , (\zl{S})_{i_N=\beta} \rangle_R &= \langle (\jz{S})^R_{[N]}(:,\alpha), (\jz{S})^R_{[N]}(:,\beta) \rangle_R  = ((\jz{S}_N)^R_{[N]}(:,\alpha))^H (\jz{S}_N)^R_{[N]}(:,\beta) \\
        &= (\jz{Z}^H(\jz{R}_N)^R_{[N]}(:,\alpha))^H (\jz{Z}^H\jz{R}_N)^R_{[N]}(:,\beta) \\
        &= ((\jz{R}_N)^R_{[N]}(:,\alpha))^H (\jz{R}_N)^R_{[N]}(:,\beta) = 0,~~~~\alpha\neq \beta;
    \end{align*}

    (3) Weak-orthogonality: for $k=m+1,\ldots,N$, 
    \begin{align*}
        Re\langle (\zl{S})_{i_k=\alpha}, (\zl{S})_{i_k=\beta} \rangle_R  
        &= Re\bigxiaokuohao{\jz{S}^R_{[k]}(\alpha,:)^H\jz{S}^R_{[k]}(\beta,:)} =Re\bigxiaokuohao{(\jz Z^H(\jz{R}_N)^R_{[k]}(\alpha,:))^H\jz Z^H(\jz{R}_N)^R_{[k]}(\beta,:)} \\
        &= Re\bigxiaokuohao{((\jz{R}_N)^R_{[k]}(\alpha,:))^H(\jz{R}_N)^R_{[k]}(\beta,:)}\\
        &= Re\langle (\zl{R}_N)_{i_k=\alpha}, (\zl{R}_N)_{i_k=\beta} \rangle_R = 0,~~~~\alpha\neq \beta.
    \end{align*}

    \color{black}
  The above establishes the required  properties of $\zl S$ along modes $m+1,\ldots,N$.  In a similar fashion, one can also prove  that for $j=1,\ldots,m$: 1) $\|\zl{S}_{i_j=1}\|_F=(\sigma_1^L)^{(j)}\geq   \cdots \geq \|\zl{S}_{i_j=I_j}\|_F=(\sigma_{I_j}^L)^{(j)}\geq 0$, 2) $Re\langle \zl{S}_{i_j=\alpha}, \zl{S}_{i_j=\beta} \rangle_L =0$, and 3) $\langle \zl{S}_{i_1=\alpha} , \zl{S}_{i_1=\beta} \rangle_L =0,~\alpha\neq\beta$, via 1)   proving similar properties for   $\zl{L}_1$, and 2)   establish the relation between $\zl{L}_1$ and $\zl{S}$, as that for $\zl{R}_N$ and $\zl{S}$ above.  For  conciseness, we omit the detailed proof. 
    \color{black}
\end{proof}
\color{black}

\subsection{Matrix SVD perspective} We discuss an essential difference between TS-QHOSVD and L-QHOSVD in the matrix case.  Let $\jz T\in\quat{I_1\times I_2}$. It is easily seen that applying TS-QHOSVD to $\jz T$  is just the standard quaternion matrix SVD. 
On the other hand, applying  L-QHOSVD to $\jz T$ yields the decomposition in the following form:
\begin{equation}\label{eq:lqhosvd_N_2}
\jz T = \jz U_2 \times_2^L \jz U_1 \times_1^L \jz S. 
\end{equation}
If $\jz S$ is quaternion, then due to the non-commutativity of quaternion multiplications, \eqref{eq:lqhosvd_N_2}   is not equivalent to 
$
\jz U_1 \jz S\jz U_2^H~{\rm nor}~\jz U_1\jz S\jz U_2, 
$
namely,   the form  of matrix SVD.  In the following, we show that $\jz S$ may be quaternion. 
\begin{proposition}
    Let  $\jz S,\jz U_1,\jz U_2$ be generated by L-QHOSVD applied to   $\jz T\in\quat{I_1\times I_2}$. Then $\jz S$ may be quaternionic.
\end{proposition}
\begin{proof}
 According to the description above \refthm{L}, we first denote $\jz T_3:=\jz T$. Then, write $(\jz T_3)^L_{[2]}$ as the mode-$2$ unfolding of $\jz T_3$, which, according to its definition, is $(\jz T_3)^L_{[2]} = (\jz T_3)^T = \jz T^T$.   By   L-QHOSVD,  one first computes the SVD of $(\jz T_3)^L_{[2]} = \jz T^T= \jz U_2 \Sigma_2\jz V_2^H$ with $\jz U_2$ the required mode-$2$ factor matrix and $ \Sigma_2$ a real diagonal matrix. Then one computes $\jz T_2 = \jz U_2^H\times_2^L \jz T_3 = \jz U_2^H\times_2^L \jz T$, which is
 \begin{align*} 
     (\jz T_2)_{i_1 i_2} = \sum_{j_2=1}^{I_2}\nolimits(\jz U_2^H)_{i_2 j_2}(\jz T)_{i_1 j_2} = \sum_{j_2=1}^{I_2}\nolimits(\jz U_2^H)_{i_2 j_2}(\jz T^T)_{j_2 i_1} = (\jz U_2^H\jz T^T)_{i_2i_1}, 
 \end{align*}
 and so 
$    \jz T_2 = (\jz U_2^H\jz T^T)^T = (\Sigma_2\jz V_2^H)^T = \bar{\jz V}_2\Sigma_2^T$, 
 where the second equality follows from the SVD expression of $\jz T^T$, and the last one uses Proposition \ref{lem:real_quater_transpose_commutat}.  Here $\bar{\jz V}_2$ is the conjugation of $\jz V_2$ which may not be unitary.
 One next computes the SVD of $(\jz T_2)^L_{[1]} = \jz T_2 = \jz U_1\Sigma_1\jz V_1^H$ to obtain $\jz{U}_1$, and finally, the core tensor $\jz S = \jz U_1^H\times_1^L\jz T_2 = \jz U_1^H \jz T_2 = \jz U_1^H \bar{\jz V}_2\Sigma_2^T$, which may still be a quaternion matrix.
\end{proof}



\color{black}

\section{Truncated TS-HOSVD and Error Bound Analysis}\label{error}
  In this section, we consider a rank-truncated version of   TS-QHOSVD  and     establish its error bound, extending those of Algorithm \ref{algo:hosvd} and \ref{algo:sequential_hosvd}. An analogue will also be done for L-QHOSVD. 

\subsection{Truncated TS-QHOSVD}
The truncated TS-QHOSVD is executed similar to   TS-QHOSVD   (Algorithm~\ref{algo:trun_ts}), except that the   SVD is replaced by a truncated SVD. Specifically,  given truncated ranks $\{I'_1,I'_2,\dots,I'_N\}$, we first initialize $\hat{\zl L}_{m+1} = {\zl T} = \hat{\zl R}_m$, and then simultaneously launch two independent recursion procedure on two processors: 
\begin{equation}
    \begin{split}
        &\qquad ({\rm Processor~1}) \qquad\qquad\qquad~~~ ({\rm Processor~2}) \\
        &\hat{\zl{L}}_{m}  :=\hat{\jz{U}}_m^H \times_m^L \hat{\zl{L}}_{m+1} \qquad\quad~~ \hat{\zl{R}}_{m+1}  :=\hat{\zl{R}}_m \times_{m+1}^R \hat{\jz{V}}_{m+1} \\
         &\qquad \vdots  \qquad\qquad\qquad\qquad\qquad\qquad \vdots  \\
        &\hat{\zl{L}}_1  :=\hat{\jz{U}}_1^H\times_1^L \hat{\zl{L}}_2   \qquad\qquad~~~~ \hat{\zl{R}}_{N} :=\hat{\zl{R}}_{N-1} \times_{N}^R \hat{\jz{V}}_{N},
    \end{split}
    \label{eq:parallel_recursion_truncated}
\end{equation}
in which $\hat{\jz{U}}_i\in\quat{I_i\times I_i^\prime}$ consists of the left leading  $I_i^\prime$ singular vectors of $(\hat{\jz{L}}_{i+1})^L_{[i]}\in \quat{I_i \times (\prod_{\substack{s=1}}^{i-1} I_s)(\prod_{\substack{s=i+1}}^{m} I_s^\prime)(\prod_{\substack{s=m+1}}^{N} I_s)}$,  $i=m, \ldots,1$, and $\hat{\jz{V}}_j\in\quat{I_j\times I_j^\prime}$ consists of    the right leading $I_j^\prime$ singular vectors of $(\hat{\jz{R}}_{j-1})^R_{[j]}\in \quat{(\prod_{\substack{s=1}}^{m} I_s)(\prod_{\substack{s=m+1}}^{j-1} I_s^\prime)(\prod_{\substack{s=j+1}}^{N} I_s)\times I_{j}}$, $j=m+1,m+2,\ldots,N$.
Finally, we can use the left factor matrices \( \hat{\jz U}_1, \dots, \hat{\jz U}_m \) (or right factors \( \hat{\jz V}_{m+1}, \dots, \hat{\jz V}_N \), or both) to compute  the core tensor \( \hat{\zl S} \) as:
\[
\hat{\zl S} = \hat{\jz U}_1^H\times^L_1 \cdots \hat{\jz U}_m^H\times^L_m \hat{\zl R}_N = \hat{\zl L}_1\times_{m+1}^R \hat{\jz{V}}_{m+1}\cdots \times_{N}^R\hat{\jz{V}}_{N} = \hat{\jz U}^H_1 \times^L_1 \cdots    \hat{\jz U}^H_m \times^L_m \zl T\times^R_{m+1} \hat{\jz V}_{m+1} \cdots \times^R_N \hat{\jz V}_N.
\]

The procedure is summarized in Algorithm \ref{algo:trun_ts}. 
The algorithm shows that if the truncated parameters are much smaller than the size of $\zl T$, then the size of the intermediate tensors $\hat{\zl{L}}_i,\hat{\zl{R}}_j$ will be eventually smaller, which  reduces the computational complexity. This is the same as the sequential HOSVD (Algorithm \ref{algo:sequential_hosvd}).


\begin{small}
 \begin{algorithm}
    \small
    \caption{Truncation TS-QHOSVD}
    \begin{algorithmic}[1]
        \REQUIRE{ $\zl{T}\in \quat{I_1\times I_2\times\cdots \times I_N}$ and truncated parameters $\{I'_1,I'_2,\dots,I'_N\}$. Denote $\hat{\zl L}_{m+1} = {\zl T} = \hat{\zl R}_{m}$.}
        \ENSURE{$\hat{\zl{S}}\in \quat{I_1^\prime\times \cdots \times I_N^\prime}$, $\{\hat{\jz{U}}_1,\ldots ,\hat{\jz{U}}_m, \hat{\jz{V}}_{m+1},\ldots ,\hat{\jz{V}}_{N}\}.$}
    
    \FOR{$i=m:-1:1$}
        \STATE{$\hat{\jz{U}}_{i}\leftarrow $ left leading $I_i'$ singular vectors of $(\hat{\jz{L}}_{i+1})^L_{[i]}$~~~~~~~~~~~~~~~~~~~~~~~~~~~{\color{gray}\% on Processor 1}}
        \STATE{$\hat{\zl{L}}_{i}\leftarrow \hat{\jz{U}}_{i}^H \times_k^L \hat{\zl{L}}_{i+1}$~~~~~~~~~~~~~~~~~~~~~~~~~~~~~~~~~~~~~~~~~~~~~~~~~~~~~~~~~~~~~~~~~~{\color{gray}\% $\in \mathbb{H}^{I_1\times \cdots I_{i-1}\times I'_{i}\times \cdots \times I'_{m}\times I_{m+1}\times \cdots \times I_N}$ }}
    \ENDFOR
    \FOR{$j = m+1 : N$}
        \STATE{$\hat{\jz{V}}_{j}\leftarrow $ right leading $I_j'$ singular vectors of $(\hat{\jz{R}}_{j-1})^R_{[j]}$~~~~~~~~~~~~~~~~~~~~~~~~{\color{gray}\% on Processor 2}}
        \STATE{$\hat{\zl{R}}_j\leftarrow \hat{\zl{R}}_{j-1}\times_{j}^R \hat{\jz{V}}_{j}$~~~~~~~~~~~~~~~~~~~~~~~~~~~~~~~~~~~~~~~~~~~~~~~~~~~~~~~~~~~~~~~~~~{\color{gray}\% $\in \mathbb{H}^{I_1\times \cdots \times I_m \times I'_{m+1}\times \cdots \times I'_{j}\times I_{j+1}\times \cdots \times I_N}$ }  }
    \ENDFOR
    \STATE{$\hat{\zl S} \leftarrow \hat{\jz U}_1^H\times^L_1 \cdots \hat{\jz U}_m^H\times^L_m \hat{\zl R}_N$}
    \end{algorithmic}
    \label{algo:trun_ts}
 \end{algorithm}
\end{small}

 \color{black} Having been discussed in Remark \ref{rmk:lqhosvd}, \cite{miao2023quat} truncated L-QHOSVD is to first perform a full decomposition, and then it shirinks   the core tensor, which  may be more  costly than the rank-truncated fashion. On the other hand, this truncation strategy may not be suitable for   low-rank approximation. Thus we adopt rank truncation here.\color{black}

Once the core tensor and the factors are obtained, the approximate tensor can be reconstructed as:
  \begin{equation}
    \begin{split}\hat{\zl{T}} &= \hat{\jz{U}}_m\times_m^L\cdots \hat{\jz{U}}_{1} \times_1^L\hat{\zl{S}} \times_N^R \hat{\jz{V}}_N^H\dots \times_{m+1}^R\hat{\jz{V}}^H_{m+1}\\
        &= \hat{\jz{U}}_m\times_m^L\cdots \hat{\jz{U}}_{1}\times_{1}^L\left(\hat{\jz{U}}_1^H\times_1^L\cdots \hat{\jz{U}}_{m}^H\times_{m}^L\zl{T} \times_{m+1}^R \hat{\jz{V}}_{m+1}\cdots \times_N^R\hat{\jz{V}}_N\right) \times_N^R \hat{\jz{V}}_N^H\dots \times_{m+1}^R\hat{\jz{V}}^H_{m+1}\label{eq:hat_T_representation}.
    \end{split}\end{equation}
    The coming subsection shows that $\|\hat{\zl{T}} - \zl{T}\|_F$ can be bounded by the tail energy.
 
\subsection{Error analysis}
To analyze the error bound, we first present some notations in the sequel.
Recall that the left mode-$i$ unfolding of $\hat{\zl L}_{i+1}$ and right mode-$j$ unfolding of $\hat{\zl R}_{j-1}$ are respectively written as $(\hat{\jz{L}}_{i+1})^L_{[i]}$ and $(\hat{\jz{R}}_{j-1})^R_{[j]}$. For any $i=1,2,\cdots,m$ and $j = m+1,m+2,\cdots,N$, we write the   SVD  of $(\hat{\jz{L}}_{i+1})^L_{[i]}$ and $(\hat{\jz{R}}_{j-1})^R_{[j]}$ as:
\begin{equation}
    \begin{split}
         (\hat{\jz{L}}_{i+1})^L_{[i]}&=\jz{U}_i\Sigma^L_i\jz{V}^H_i = [\hat{\jz U}_i, \tilde{\jz U}_i] \Sigma^L_i\jz{V}^H_i
\in \quat{I_i \times (\prod_{\substack{s=1}}^{i-1} I_s)(\prod_{\substack{s=i+1}}^{m} I_s^\prime)(\prod_{\substack{s=m+1}}^{N} I_s)}; \\
         (\hat{\jz{R}}_{j-1})^R_{[j]}&=\jz{U}_{j}\Sigma^R_{j}\jz{V}^H_{j}=\jz{U}_{j}\Sigma^R_{j}\begin{bmatrix}
             \hat{\jz V}_{j}^H \\
             \tilde{\jz V}_{j}^H
         \end{bmatrix} 
         \in \quat{(\prod_{\substack{s=1}}^{m} I_s)(\prod_{\substack{s=m+1}}^{j-1} I_s^\prime)(\prod_{\substack{s=j+1}}^{N} I_s)\times I_{j}},
    \end{split}
    \label{equ:svd}
\end{equation}
where for the first relation, $\jz U_i = [\hat{\jz U}_i, \tilde{\jz U}_i]\in \quat{I_i\times I_i}$,  $\hat{\jz U}_i\in \quat{I_i\times I_i^\prime}$ is the required factor matrix, and $\tilde{\jz U}_i\in\quat{I_i \times (I_i-I_i^\prime) }$ represents its orthonormal complement.  For the second relation,  $\jz V_j = [\hat{\jz V}_j,\tilde{\jz V}_j] \in \quat{I_j \times I_j}$ is partitioned similarly. 


\begin{theorem}[Error bound of truncated TS-QHOSVD] \label{thm:error_bound_ts_qhosvd}
  \color{black}  For   $\zl{T}\in\quat{I_1\times \cdots \times I_N}$, 
    let $\hat{\zl{T}}$ be a  ($I'_1,I'_2,\dots,I'_N$) truncated TS-QHOSVD  of $\zl{T}$ given by
    \begin{align*}
        \hat{\zl{T}}=\hat{\jz{U}}_m\times_m^L\cdots \hat{\jz{U}}_{1}\times_{1}^L\hat{\zl{S}} \times_N^R \hat{\jz{V}}_N^H\cdots \times_{m+1}^R\hat{\jz{V}}^H_{m+1},
    \end{align*}
    where $\hat{\zl{S}},\hat{\jz{U}}_1,\cdots,\hat{\jz{U}}_m,\hat{\jz V}_{m+1},\cdots,\hat{\jz V}_N$ are computed by \refalgo{algo:trun_ts}. 
    Then we have the following error bound:
    \begin{align*}
        \|\zl{T}-\hat{\zl{T}}\|_F^2 
        \leq &\sum_{k=I'_1+1}^{I_1}((\sigma^L_{k})^{{(1)}})^2+\cdots+\sum_{k=I'_m+1}^{I_m}((\sigma^L_{k})^{{(m)}})^2 \\
        &+\sum_{k=I'_{m+1}+1}^{I_{m+1}}((\sigma^R_{k})^{{(m+1)}})^2+\cdots+\sum_{k=I'_N+1}^{I_N}((\sigma_{k}^R)^{{(N)}})^2.
    \end{align*}
    where $(\sigma^L_{k})^{(i)}$ and $(\sigma^R_{k})^{(j)}$ are the $k$-th largest singular values of $\Sigma^L_i$ and $\Sigma^R_j$ defined in \eqref{equ:svd}, respectively.
\end{theorem}

    The result  shows  that in contrast to a direct quaternion extension of the truncated HOSVD (see Example \ref{example:1}), the reconstruction error corresponding to truncated TS-QHOSVD is upper bounded by the     tail energy, which generalizes that of truncated HOSVD and sequential HOSVD \cite{hosvd,vannieuwenhoven2012new}. In the following,  we use the tensor in Example \ref{example:1} to illustrate the above fact. We set truncated ranks
  $(I_1^\prime,\!\ldots\!,I_4^\prime) = (2,2,2,2)$. After computation,   $\Sigma_1^L,\Sigma^L_2,\Sigma^R_3,\Sigma^R_4$, i.e.,   the   singular value matrices defined in \eqref{equ:svd}, are given as follows:
\begin{align*}  
    \Sigma^L_1 = {\rm diag}(5.0110,0.4822,0.0717),~\Sigma^L_2 = {\rm diag}(5.0134,0.4621,0.0605),\\
    \Sigma^R_3 = {\rm diag}(5.0134,0.4621,0.0605),~\Sigma^R_4 = {\rm diag}(5.0122, 0.4689,0.0713).
\end{align*}
One can compute the error bound    
 $
    \|\zl{T}-\hat{\zl T}\|_F^2=0.0171 \leq \sigma_3(\Sigma^L_1)^2+\sigma_3(\Sigma^L_2)^2+\sigma_3(\Sigma^R_3)^2+\sigma_3(\Sigma^R_4)^2 = 0.0175
 $.

To prove the error bound, we first need the following lemmas.
\begin{lemma}    \label{lem:ts-1}
    Given   $\zl{T}\in \quat{I_1\times \cdots \times I_N}$, let $\hat{\zl{S}}\in \quat{I'_1\times \cdots \times I'_N}$, $\hat{\jz{U}}_k\in \quat{I_k\times I'_k}$ and
    $\hat{\jz{V}}_k\in \quat{I_k\times I'_k}$ be  generated by   \refalgo{algo:trun_ts}. Then 
    \[\|\hat{\jz{U}}_k\times_k^L \hat{\zl{S}}\|_F= \|\hat{\zl{S}}\|_F,~ ~\|\hat{\jz{U}}_k^H\times_k^L \zl{T}\|_F\leq \|\zl{T}\|_F; ~~~~\|\hat{\zl{S}}\times_k^R \hat{\jz{V}}_k^H\|_F= \|\hat{\zl{S}}\|_F, ~~\|\zl{T}\times_k^R \hat{\jz{V}}_k\|_F\leq \|\zl{T}\|_F.\]
\end{lemma}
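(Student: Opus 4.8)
The plan is to reduce each of the four claims to a fact about quaternion \emph{matrices} by unfolding, and then to invoke the elementary principle that (left- or right-) multiplying a quaternion matrix by a matrix with orthonormal columns, or by its conjugate transpose, never increases the Frobenius norm and preserves it on the ``isometric'' side. The only thing to watch throughout is that the non-commutativity of $\mathbb{H}$ forces us to stay inside operations that behave well: associativity of matrix products and $(\jz P\jz Q)^H=\jz Q^H\jz P^H$ are fine, but we must avoid $(\cdot)^T$ and the trace identity $\|\jz A\|_F^2=tr(\jz A^H\jz A)$ with cyclic manipulations, since the transpose of a unitary quaternion matrix need not be unitary and the trace is not cyclic over $\mathbb H$. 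Working column/row-wise with $\jz x^H\jz x$ avoids all of this.

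First I would record two structural ingredients. (a) For any quaternion tensor $\zl A$ and any mode $k$, $\|\zl A\|_F=\|\jz A^L_{[k]}\|_F=\|\jz A^R_{[k]}\|_F$, since the unfoldings merely rearrange the entries of $\zl A$. (b) By \reflem{left} and \reflem{right}, $(\hat{\jz U}_k\times_k^L\hat{\zl S})^L_{[k]}=\hat{\jz U}_k\hat{\jz S}^L_{[k]}$, $(\hat{\jz U}_k^H\times_k^L\zl T)^L_{[k]}=\hat{\jz U}_k^H\jz T^L_{[k]}$, $(\hat{\zl S}\times_k^R\hat{\jz V}_k^H)^R_{[k]}=\hat{\jz S}^R_{[k]}\hat{\jz V}_k^H$, and $(\zl T\times_k^R\hat{\jz V}_k)^R_{[k]}=\jz T^R_{[k]}\hat{\jz V}_k$. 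Since $\hat{\jz U}_k,\hat{\jz V}_k$ are columns of unitary matrices, they satisfy $\hat{\jz U}_k^H\hat{\jz U}_k=\jz I$ and $\hat{\jz V}_k^H\hat{\jz V}_k=\jz I$. Combining (a) and (b), it suffices to prove, for a quaternion matrix $\jz W$ with $\jz W^H\jz W=\jz I$ and any conformable $\jz M$,
\begin{equation*}
\|\jz W\jz M\|_F=\|\jz M\|_F,\qquad \|\jz M\jz W^H\|_F=\|\jz M\|_F,\qquad \|\jz W^H\jz M\|_F\le\|\jz M\|_F,\qquad \|\jz M\jz W\|_F\le\|\jz M\|_F.
\end{equation*}

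For these matrix facts I would expand the Frobenius norm column-wise, $\|\jz A\|_F^2=\sum_j\jz A(:,j)^H\jz A(:,j)$, or row-wise, $\|\jz A\|_F^2=\sum_i\jz A(i,:)\jz A(i,:)^H$, noting $\jz x^H\jz x=\sum_i|x_i|^2\ge 0$ is a nonnegative real for every quaternion vector $\jz x$. Then $\|\jz W\jz M\|_F^2=\sum_j\jz M(:,j)^H\jz W^H\jz W\jz M(:,j)=\sum_j\jz M(:,j)^H\jz M(:,j)=\|\jz M\|_F^2$ by associativity and $(\jz P\jz Q)^H=\jz Q^H\jz P^H$; the identity for $\jz M\jz W^H$ is the same computation done row-wise, using $\jz W^H(\jz W^H)^H=\jz W^H\jz W=\jz I$. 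For the two inequalities, $\jz W\jz W^H$ is an orthogonal projection (Hermitian, and idempotent because $\jz W^H\jz W=\jz I$), so $\jz x^H(\jz W\jz W^H)\jz x\le\jz x^H\jz x$ for all $\jz x$ — indeed $\jz x^H\jz x=\jz x^H\jz W\jz W^H\jz x+\jz x^H(\jz I-\jz W\jz W^H)\jz x$ with both summands nonnegative reals — whence $\|\jz W^H\jz M\|_F^2=\sum_j\jz M(:,j)^H\jz W\jz W^H\jz M(:,j)\le\|\jz M\|_F^2$ and, row-wise, $\|\jz M\jz W\|_F^2\le\|\jz M\|_F^2$. (Alternatively one completes $\jz W$ to a full unitary $[\jz W,\tilde{\jz W}]$ and uses that full unitary multiplication is an isometry together with the additivity of $\|\cdot\|_F^2$ over the block rows/columns.)

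Substituting $\jz W=\hat{\jz U}_k$, $\jz M=\hat{\jz S}^L_{[k]}$ yields $\|\hat{\jz U}_k\times_k^L\hat{\zl S}\|_F=\|\hat{\jz S}^L_{[k]}\|_F=\|\hat{\zl S}\|_F$; $\jz W=\hat{\jz U}_k$, $\jz M=\jz T^L_{[k]}$ gives $\|\hat{\jz U}_k^H\times_k^L\zl T\|_F\le\|\jz T^L_{[k]}\|_F=\|\zl T\|_F$; and the right-hand analogues with $\jz M=\hat{\jz S}^R_{[k]}$ and $\jz M=\jz T^R_{[k]}$ give the last two relations identically. I do not expect a genuine obstacle here; the only real care point is the bookkeeping described in the first paragraph — ensuring every step uses only $(\cdot)^H$ and associativity, which is exactly why the column/row-wise Frobenius-norm expansion (rather than a trace-based one) is the right vehicle.
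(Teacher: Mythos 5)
Your proposal is correct and follows essentially the same route as the paper: unfold to a matrix via Lemmas \ref{left} and \ref{right}, use $\hat{\jz U}_k^H\hat{\jz U}_k=\jz I$ for the isometry, and use the orthogonal complement (Pythagoras for the Frobenius norm) for the inequality. The only cosmetic difference is that the paper computes $\|\jz W\jz M\|_F^2=tr(\jz M^H\jz W^H\jz W\jz M)$ directly—which needs no cyclic permutation of the trace, so your extra caution there, while sensible, is not actually required—whereas you expand column-/row-wise.
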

\begin{proof}
    Here we only prove the left product case, while for the right product, the proof is analogous:
    \[\|\hat{\jz{U}}_k\times_k^L \hat{\zl{S}}\|_F^2=\|\hat{\jz{U}}_k\hat{\jz{S}}_{[k]}^L\|_F^2
    =tr((\hat{\jz{S}}_{[k]}^L)^H\hat{\jz{U}}_k^H\hat{\jz{U}}_k\hat{\jz{S}}_{[k]}^L)
    =tr((\hat{\jz{S}}_{[k]}^L)^H\hat{\jz{S}}_{[k]}^L)
    =\|\hat{\jz{S}}_{[k]}\|_F^2=\|\hat{\zl{S}}\|_F^2.\]
    On the other hand,
    \begin{align*}
        \|\hat{\jz{U}}_k^H\times_k^L \zl{T}\|_F^2=\|\hat{\jz{U}}_k^H\jz{T}^L_{[k]}\|_F^2 = \|\hat{\jz{U}}_k\hat{\jz{U}}_k^H\jz{T}^L_{[k]}\|_F^2 \leq 
    \|\jz{T}^L_{[k]}\|^2_F=\|\zl{T}\|^2_F.
    \end{align*}
\end{proof}

\begin{lemma} 
    \label{lem:truncated_ts_qhosvd_lem3}
    Define the following tensors:
    \begin{align*}
        & \hat{\zl T}_R := \hat{\zl R}_N\times^R_N \hat{\jz V}_N^H \cdots \times^R_{m+1} \hat{\jz V}^H_{m+1}=(\zl{T}\times_{m+1}^R\hat{\jz{V}}_{m+1}\cdots\times_N^R\hat{\jz V}_N)\times_N^R\hat{\jz{V}}_N^H\cdots\times_{m+1}^R\hat{\jz{V}}_{m+1}^H; \\
        & \hat{\zl G}_i := \hat{\zl L}_{i+1} - \hat{\jz U}_{i}\times^L_{i}\cdots \hat{\jz U}_{1}\times^L_1 (\hat{\jz U}_1^H \times^L_1 \cdots \hat{\jz U}_m^H \times^L_m \hat{\zl T}_R),~ i = m,m-1,\ldots ,1;\\
        & \hat{\zl H}_j := \hat{\zl R}_{j-1} - \hat{\zl R}_N\times^R_N \hat{\jz V}^H_N \cdots \times^R_{j} \hat{\jz V}^H_{j},~ j = m+1,m+2,\ldots, N.
    \end{align*}
    Then the following identities hold:
    \[
        \|\hat{\zl G}_{m}\|_F^2 = \|{\zl T} - \hat{\zl T}\|_F^2,\quad {\rm and} \quad \|\hat{\zl H}_{m+1}\|_F^2 = \|{\zl T} - \hat{\zl T}_R\|_F^2.
    \]
    In particular, we define the boundary cases:
    \begin{align*}
        \hat{\zl G}_0 = \hat{\zl L}_1 - \hat{\jz U}_1^H \times_1^L \cdots \hat{\jz U}_m^H \times_m^L \hat{\zl T}_R,\quad {\rm and} \quad \hat{\zl H}_{N+1} = \hat{\zl R}_N - \hat{\zl R}_N = 0.
    \end{align*}
    Then we have:
    \begin{equation}\label{equ:hat_G0_bound}
        \begin{split}
            & \|\hat{\zl G}_0\|_F^2 = \|\hat{\jz U}_1^H \times_1^L \cdots \hat{\jz U}_m^H \times_m^L ({\zl T} - \hat{\zl T}_R)\|_F^2 \leq \|({\zl T} - \hat{\zl T}_R)\|_F^2 = \|\hat{\zl H}_{m+1}\|_F^2.
        \end{split}
    \end{equation}
\end{lemma}
\begin{proof}
    From the initialization $\hat{\zl L}_{m+1} = {\zl T} = \hat{\zl R}_m$ and the definitions of $\hat{\zl G}_i$, $\hat{\zl H}_j$, and $\hat{\zl T}$ we have:
    \begin{align*}
        &\|\hat{\zl G}_m\|_F^2 = \|\hat{\zl L}_{m+1} - \hat{\jz U}_m\times^L_m \cdots \hat{\jz U}_1\times^L_1 (\hat{\jz U}_1^H \times^L_1 \cdots \hat{\jz U}_m^H \times^L_m \hat{\zl T}_R)\|_F^2 = \|\hat{\zl L}_{m+1} - \hat{\zl T}\|_F^2 = \|{\zl T} - \hat{\zl T}\|_F^2,\\
        &\|\hat{\zl H}_{m+1}\|_F^2 = \|\hat{\zl R}_m - \hat{\zl R}_N\times^R_N \hat{\jz V}^H_N\cdots\times^R_{m+1} \hat{\jz V}^H_{m+1}\|_F^2 = \|{\zl T} - \hat{\zl T}_R\|_F^2.
    \end{align*}
    For the boundary case of $\hat{\zl G}_0$, using the definition $\hat{\zl L}_1 = \hat{\jz U}_1^H \times_1^L \cdots \hat{\jz U}_m^H \times_m^L {\zl T}$ we have:
    \begin{align*}
        \|\hat{\zl G}_0\|_F^2 =& \|\hat{\zl L}_1 - \hat{\jz U}_1^H \times_1^L \cdots \hat{\jz U}_m^H \times_m^L {\zl T}_R\|_F^2 = \|\hat{\jz U}_1^H \times_1^L \cdots \hat{\jz U}_m^H \times_m^L {\zl T} - \hat{\jz U}_1^H \times_1^L \cdots \hat{\jz U}_m^H \times_m^L {\zl T}_R\|_F^2\\
        =& \|\hat{\jz U}_1^H \times_1^L \cdots \hat{\jz U}_m^H \times_m^L ({\zl T}  - \hat{\zl T}_R)\|_F^2 \leq \|({\zl T}  - \hat{\zl T}_R)\|_F^2= \|\hat{\zl H}_{m+1}\|_F^2.
    \end{align*}
    where the inequality follows from Lemma \ref{lem:ts-1}. 
\end{proof}
\begin{proof}[Proof of \refthm{thm:error_bound_ts_qhosvd}]
    For $i = m, m-1, \dots, 1$,  we derive:
    \begin{align*}
        \|\hat{\zl G}_i\|_F^2 = & \|\hat{\zl L}_{i+1} - \hat{\jz U}_{i}\times^L_{i}\cdots \hat{\jz U}_{1}\times^L_1 \hat{\jz U}_1^H \times^L_1 \cdots \hat{\jz U}_m^H \times^L_m \hat{\zl T}_R\|_F^2 \\
            = & \|\hat{\zl L}_{i+1} {\color{blue}- \hat{\jz U}_{i}\hat{\jz U}_{i}^H \times_{i}^L\hat{\zl L}_{i+1} + \hat{\jz U}_{i}\hat{\jz U}_{i}^H \times_{i}^L\hat{\zl L}_{i+1}} - \hat{\jz U}_{i}\times^L_{i}\cdots \hat{\jz U}_m\times^L_m \hat{\jz U}_m^H \times^L_m \cdots \hat{\jz U}_1^H \times^L_1 \hat{\zl T}_R\|_F^2 \\
            = & \|({\jz I} - \hat{\jz U}_{i}\hat{\jz U}_{i}^H) \times_{i}^L\hat{\zl L}_{i+1} + \hat{\jz U}_{i} \times^L_{i}(\hat{\zl L}_{i} - \hat{\jz U}_{i-1}\times^L_{i-1}\cdots \hat{\jz U}_m\times^L_m \hat{\jz U}_m^H \times^L_m \cdots \hat{\jz U}_1^H \times^L_1 \hat{\zl T}_R)\|_F^2 \\
            = & \|({\jz I} - \hat{\jz U}_{i}\hat{\jz U}_{i}^H) \times_{i}^L\hat{\zl L}_{i+1} + \hat{\jz U}_{i} \times^L_{i}\hat{\zl G}_{i-1}\|_F^2 = \|({\jz I} - \hat{\jz U}_{i}\hat{\jz U}_{i}^H) \times_{i}^L\hat{\zl L}_{i+1}\|_F^2 + \|\hat{\jz U}_{i} \times^L_{i}\hat{\zl G}_{i-1}\|_F^2 \\
             = & 
            \sum_{k = I_i^\prime+1}^{I_i}\nolimits \bigxiaokuohao{(\sigma^L_{k})^{(i)}}^2 + \|\hat{\zl G}_{i-1}\|_F^2,
    \end{align*}
    where the third equality uses the recursive formula $\hat{\zl L}_{i} = \hat{\jz U}_i^H \times^L_{i} \hat{\zl L}_{i+1}$ in \eqref{eq:parallel_recursion_truncated}, the fourth one is from the definition of $\hat{\zl G}_i$, the fifth one uses Pythagorean theorem,   and the last one uses Lemma \ref{lem:ts-1} and that   $\|({\jz I} - \hat{\jz U}_{i}\hat{\jz U}_{i}^H) \times_{i}^L\hat{\zl L}_{i+1}\|_F^2=  \|({\jz I} - \hat{\jz U}_{i}\hat{\jz U}_{i}^H)(\hat{\jz L}_{i+1})^L_{[i]}\|_F^2$ is just the tail energy of $(\hat{\jz L}_{i+1})^L_{[i]}$, namely,  $ \sum_{k = I_{i}^\prime+1}^{I_i} \bigxiaokuohao{(\sigma^L_{k})^{(i)}}^2$. 
    

    For the right recursion on $\hat{\zl{H}}_j$ for $j = m+1, \dots, N$, a similar fashion yields: 
    \begin{align*}
        \|\hat{\zl H}_j\|_F^2 =& \|\hat{\zl R}_{j-1} - \hat{\zl R}_N\times^R_N \hat{\jz V}^H_N \cdots \times^R_{j} \hat{\jz V}^H_{j}\|_F^2\\
        = & \|\hat{\zl R}_{j-1} {\color{blue}- \hat{\zl R}_{j-1}\times^R_j \hat{\jz V}_j\hat{\jz V}^H_j + \hat{\zl R}_{j-1}\times^R_j \hat{\jz V}_j\hat{\jz V}^H_j} - \hat{\zl R}_N\times^R_N \hat{\jz V}^H_N \cdots \times^R_{j} \hat{\jz V}^H_{j}\|_F^2\\
        = & \|\hat{\zl R}_{j-1}\times^R_j(I - \hat{\jz V}_j\hat{\jz V}^H_j) + (\hat{\zl R}_{j} - \hat{\zl R}_N\times^R_N \hat{\jz V}^H_N \cdots \times^R_{j+1} \hat{\jz V}^H_{j+1})\times^R_{j} \hat{\jz V}^H_{j}\|_F^2 \\  
        = & \|(\hat{\jz R}_{j-1})^R_{[j]}(I - \hat{\jz V}_j\hat{\jz V}^H_j)\|_F^2 + \|(\hat{\jz H}_{[j+1]})^R_{[j]}\hat{\jz V}^H_j\|_F^2 
        =  \sum_{i_j = I'_j+1}^{I_j}\nolimits((\sigma^R_{i_j})^{(j)})^2 + \|\hat{\zl H}_{j+1}\|_F^2,
    \end{align*}
where the fourth equality uses the definition of $\hat{\zl H}_{t+1}$.


 With the above two recursions  and noting  $\|{\zl T} - \hat{\zl T}\|_F^2  =  \|\hat{\zl G}_m\|_F^2;~\|\hat{\zl G}_0\|_F^2 \leq \|\hat{\zl H}_{m+1}\|_F^2;~\hat{\zl H}_{N+1}=0 $ in Lemma \ref{lem:truncated_ts_qhosvd_lem3},  
    \begin{align*}
        \|{\zl T} - \hat{\zl T}\|_F^2  =& \|\hat{\zl G}_m\|_F^2
        \leq  \sum_{k=I'_m+1}^{I_m}\nolimits((\sigma^L_{k})^{{(m)}})^2 + \cdots + \sum_{k=I'_1+1}^{I_1}\nolimits((\sigma^L_{k})^{{(1)}})^2 + \|\hat{\zl H}_{m+1}\|_F^2 \\
        = &  
           \sum_{k=I'_m+1}^{I_m}\nolimits((\sigma^L_{k})^{{(m)}})^2 + \cdots + \sum_{k=I'_1+1}^{I_1}\nolimits((\sigma^L_{k})^{{(1)}})^2
         +\sum_{k=I'_{m+1}+1}^{I_{m+1}}\nolimits((\sigma^R_{k})^{{(m+1)}})^2+\cdots+\sum_{k=I'_N+1}^{I_N}\nolimits((\sigma_{k}^R)^{{(N)}})^2.
    \end{align*}
    \end{proof}

\begin{remark}
       The proof   is  different from the real/complex truncated HOSVDs \cite{hosvd,vannieuwenhoven2012new}, where  the reconstructed tensor  $\hat{\zl T}$ can be written as
    \begin{align*}
        \hat{\zl T} &= \hat{\zl S} \times_1 \hat{\jz U}_1 \cdots \times_N \hat{\jz U}_N  = \zl T\times_1 \hat{\jz U}_1^H \cdots \times_N \hat{\jz U}_N^H \times_1\hat{\jz U}_1\cdots\times_N\hat{\jz U}_N\\
        &= \zl T\times_1 (\hat{\jz U}_1\hat{\jz U}_1^H) \cdots\times_N (\hat{\jz U}_N\hat{\jz U}_N^H),
    \end{align*}
    which is a multilinear  projection of $\zl T$ along each mode.  
    With this representation, one   can     bound $\|\zl T-\hat{\zl T}\|_F^2$ by decomposed it as $\sum^N_{k=1}\|\zl T-\zl T\times_k(\hat{\jz U}_k\hat{\jz U}_k^H )\|_F^2$ and then obtain the error bound directly.  However, this multilinear projection representation is no longer valid in the quaternion case due to its non-commutativity in multiplicatitons.
\end{remark}

\subsection{Truncated L-QHOSVD}
Different from \cite{miao2023quat}, we can also derive a rank truncated version of   L-QHOSVD, which is presented in Algorithm \ref{algo:stl}. The error bound  analysis is analogous to that of Theorem \ref{thm:error_bound_ts_qhosvd} and is omitted.
\begin{small}
\begin{algorithm}
    \small
    \caption{Truncated L-QHOSVD}
    \begin{algorithmic}[1]
        \REQUIRE{ Quaternion tensor $\zl{T}\in \quat{I_1\times I_2\times\cdots \times I_N}$ and truncated parameters $\{I'_1,I'_2,\dots,I'_N\}$. Let $\hat{\zl T}_{N+1} = \zl T$.}
        \ENSURE{$\hat{ \zl{S}}$, $\{\hat{\jz{U}}_1, \hat{\jz{U}}_2, \cdots ,\hat{\jz{U}}_N\}.$}
    
    \FOR{$k=N:-1:1$}
        \STATE{$\hat{\jz{U}}_{k}\leftarrow $ Left leading $I_k'$ singular vectors of $(\hat{\jz{T}}_{k+1})^L_{[k]}$}
        \STATE{$\hat{\zl{T}}_{k}\leftarrow \hat{\jz{U}}_{k}^H \times_k^L \hat{\zl{T}}_{k+1}$}
    \ENDFOR
    \STATE{$\hat{ \zl{S}}\leftarrow \hat{\zl{T}_1}$}
    \end{algorithmic}     \label{algo:stl}
 \end{algorithm}
\end{small}


\begin{theorem}[Error bound of  truncated L-QHOSVD] \label{thm:error_bound_l_qhosvd}
    For  $\zl{T}\in \quat{I_1 \times \cdots \times I_N}$, let $\hat{\zl{T}}$ be a ($I'_1,I'_2,\dots,I'_N$)   truncated L-QHOSVD of $\zl{T}$, i.e.,
     $\hat{\zl{T}}=\hat{\jz{U}}_N\times_N^L\dots \hat{\jz{U}}_2\times_2^L\hat{\jz{U}}_{1}\times_{1}^L\hat{\zl{S}}$, 
    where $\hat{\zl{S}},\hat{\jz{U}}_1,\cdots,\hat{\jz{U}}_N$ are   computed by \refalgo{algo:stl}.
    Then   
    \begin{align}
        \|\zl{T}-\hat{\zl{T}}\|_F^2\leq \sum_{k=I'_1+1}^{I_1}\nolimits((\sigma_{k}^L)^{(1)})^2+\sum_{k=I'_2+1}^{I_2}\nolimits((\sigma_{k}^L)^{(2)})^2+\cdots+\sum_{k=I'_N+1}^{I_N}\nolimits((\sigma_{k}^L)^{(N)})^2,
        \label{eq:truncated_l_qhosvd_model}
    \end{align}
    where $(\sigma^L_{i_k})^{(k)}$ is the $i_k$-th largest singular value of $(\hat{\jz T}_{k+1})^L_{[k]}$.
\end{theorem}

\section{Numerical Experiments}\label{examples}
In this section, we   conducted   experiments on color video denoising,  color video compression, as well as scientific data  to show the efficacy of TS-QHOSVD.
All experiments are conducted on a desktop computer with an Intel i7-13700KF CPU and 32GB of RAM. The supporting software is MATLAB 2024a. 
For TS-QHOSVD with odd-order tensors, left factor $\jz{U}_i$'s correspond to the first $(n+1)/2$ modes, with the remaining $(n-1)/2$ modes for right factor $\jz{V}_j$'s. Performance is evaluated using relative reconstruction error (${\rm Re.Err} = \| \zl{T} - \hat{\zl{T}} \|_F/\| \zl{T} \|_F$) and elapsed time, where $\zl{T}$ denotes the original data tensor and $\hat{\zl{T}}$ represents the reconstructed one.

\paragraph{Example 1: Synthetic low-rank approximation}
\begin{figure}[htbp]
    \centering
%
%
\definecolor{mycolor1}{rgb}{0.87059,0.34510,0.16863}%
\definecolor{mycolor2}{rgb}{0.09412,0.40784,0.69804}%
\begin{tikzpicture}[scale=.8]

\begin{axis}[%
width=2.8in,
height=2.4in,
at={(0.5in,0.481in)},
scale only axis,
xmin=8,
xmax=47,
xminorticks=true,
xlabel style={font=\color{white!15!black}},
xlabel={r:},
ymin=0.2,
ymax=1.4,
yminorticks=true,
ylabel style={font=\color{white!15!black}},
ylabel={Time},
axis background/.style={fill=white},
legend style={at={(0.03,0.97)}, anchor=north west, legend cell align=left, align=left, draw=white!15!black}
]
\addplot [color=mycolor1, line width=2pt, mark=triangle, mark options={solid, mycolor1}]
  table[row sep=crcr]{%
10	0.36509033\\
11	0.37222502\\
12	0.36322542\\
13	0.39181613\\
14	0.39637901\\
15	0.40347514\\
16	0.41111563\\
17	0.42909325\\
18	0.44864719\\
19	0.45854096\\
20	0.45459976\\
21	0.47268344\\
22	0.48561608\\
23	0.50850871\\
24	0.50924397\\
25	0.51626571\\
26	0.53866471\\
27	0.53582287\\
28	0.55656028\\
29	0.57514416\\
30	0.56968742\\
31	0.59304907\\
32	0.60527744\\
33	0.62049092\\
34	0.64329369\\
35	0.65585806\\
36	0.66740288\\
37	0.70065006\\
38	0.72947156\\
39	0.72956807\\
40	0.74815406\\
41	0.77049457\\
42	0.81431015\\
43	0.82068518\\
44	0.83707379\\
45	0.87798693\\
};
\addlegendentry{TS-QHOSVD}

\addplot [color=mycolor2, line width=2pt, mark=square, mark options={solid, mycolor2}]
  table[row sep=crcr]{%
10	0.3881532\\
11	0.3954811\\
12	0.40529234\\
13	0.41055793\\
14	0.43894197\\
15	0.45794296\\
16	0.47866007\\
17	0.489219\\
18	0.49984079\\
19	0.52912122\\
20	0.53984313\\
21	0.55988017\\
22	0.57937206\\
23	0.60070259\\
24	0.61682942\\
25	0.62927953\\
26	0.65606818\\
27	0.68880226\\
28	0.69801316\\
29	0.71485422\\
30	0.76386805\\
31	0.77252701\\
32	0.79953574\\
33	0.81914697\\
34	0.84689352\\
35	0.8865604\\
36	0.89942172\\
37	0.97042845\\
38	0.99119904\\
39	1.02815047\\
40	1.04544165\\
41	1.08767264\\
42	1.12016752\\
43	1.15084692\\
44	1.18973484\\
45	1.2364993\\
};
\addlegendentry{L-QHOSVD}

\end{axis}

\begin{axis}[%
width=2.8in,
height=2.4in,
at={(4.2in,0.481in)},
scale only axis,
xmin=8,
xmax=47,
xminorticks=true,
xlabel style={font=\color{white!15!black}},
xlabel={r:},
ymin=0,
ymax=1,
yminorticks=true,
ylabel style={font=\color{white!15!black}},
ylabel={Relative Error},
axis background/.style={fill=white},
legend style={legend cell align=left, align=left, draw=white!15!black}
]
\addplot [color=mycolor1, line width=2pt, mark=triangle, mark options={solid, mycolor1}]
  table[row sep=crcr]{%
10	0.889621047519551\\
11	0.865915174258559\\
12	0.840994460087606\\
13	0.815129654765143\\
14	0.788206054764089\\
15	0.759293949559049\\
16	0.729049347993618\\
17	0.698957571405803\\
18	0.667772356797302\\
19	0.635484072488151\\
20	0.603304960068454\\
21	0.57135851664219\\
22	0.539524508009277\\
23	0.507477225390353\\
24	0.476397248075096\\
25	0.444990679673833\\
26	0.413410271207581\\
27	0.381758825916723\\
28	0.350822775995416\\
29	0.320041682564314\\
30	0.289358894701821\\
31	0.25949066391398\\
32	0.229682987943789\\
33	0.201243291945924\\
34	0.172206425331779\\
35	0.145022487156237\\
36	0.115320282736174\\
37	0.0876959530234014\\
38	0.0602828038588681\\
39	0.0328620155195691\\
40	1.95302830619946e-06\\
41	1.88135997467564e-06\\
42	1.80137157704836e-06\\
43	1.71153663626908e-06\\
44	1.60986057303594e-06\\
45	1.49383758909326e-06\\
};
\addlegendentry{TS-QHOSVD}

\addplot [color=mycolor2, line width=2pt, mark=square, mark options={solid, mycolor2}]
  table[row sep=crcr]{%
10	0.945165022973472\\
11	0.933810731614167\\
12	0.919733199820569\\
13	0.902480599909295\\
14	0.88198619909997\\
15	0.860309453388282\\
16	0.837232974554503\\
17	0.811428862261394\\
18	0.783351441597462\\
19	0.752305228417609\\
20	0.721353672981747\\
21	0.68926514368423\\
22	0.654335257098241\\
23	0.618869677421983\\
24	0.58305319586778\\
25	0.547148758138222\\
26	0.510130895891475\\
27	0.473518759340269\\
28	0.436269626328786\\
29	0.398590412963196\\
30	0.361822071405618\\
31	0.324844829372924\\
32	0.288009429707837\\
33	0.251733451076558\\
34	0.216968653505796\\
35	0.182502783613485\\
36	0.145896622292465\\
37	0.112019390354497\\
38	0.0779038958249866\\
39	0.0446534335174038\\
40	1.94345109156702e-06\\
41	1.87096751883017e-06\\
42	1.79001602002581e-06\\
43	1.69905714231455e-06\\
44	1.59610204308536e-06\\
45	1.4784972851503e-06\\
};
\addlegendentry{L-QHOSVD}

\end{axis}
\end{tikzpicture}%
    \captionsetup{aboveskip=-1pt, belowskip=-4pt}
    \caption{Comparison of elapsed time and reconstruction error for truncated TS-QHOSVD (Alg. \ref{algo:trun_ts})  and truncated L-QHOSVD (Alg. \ref{algo:stl}) on synthetic quaternion tensor   with varying truncation ranks $[r,r,r,r]$.}
    \label{fig:synthetic_tensor}
\end{figure}
\begin{figure}[htbp]
    \centering
%
%
\definecolor{mycolor1}{rgb}{0.87059,0.34510,0.16863}%
\definecolor{mycolor2}{rgb}{0.09412,0.40784,0.69804}%
\begin{tikzpicture}[scale=.8]

\begin{axis}[%
width=2.8in,
height=2.4in,
at={(0.5in,0.481in)},
scale only axis,
xmin=8,
xmax=47,
xminorticks=true,
xlabel style={font=\color{white!15!black}},
xlabel={r:},
ymin=0.2,
ymax=1.4,
yminorticks=true,
ylabel style={font=\color{white!15!black}},
ylabel={Time},
axis background/.style={fill=white},
legend style={at={(0.03,0.97)}, anchor=north west, legend cell align=left, align=left, draw=white!15!black}
]
\addplot [color=mycolor1, line width=2pt, mark=triangle, mark options={solid, mycolor1}]
  table[row sep=crcr]{%
10	0.37066772\\
11	0.37721214\\
12	0.37818426\\
13	0.39228994\\
14	0.40649232\\
15	0.41391099\\
16	0.42645221\\
17	0.42957513\\
18	0.44536134\\
19	0.45441325\\
20	0.47039646\\
21	0.4801227\\
22	0.48926044\\
23	0.49488963\\
24	0.50358858\\
25	0.52789568\\
26	0.54554837\\
27	0.55092162\\
28	0.5625629\\
29	0.57103269\\
30	0.58743783\\
31	0.60143088\\
32	0.62813754\\
33	0.63724088\\
34	0.6442649\\
35	0.66321459\\
36	0.6788663\\
37	0.70384285\\
38	0.71644085\\
39	0.73466115\\
40	0.74674292\\
41	0.77850267\\
42	0.7962749\\
43	0.83363127\\
44	0.86755511\\
45	0.91151432\\
};
\addlegendentry{TS-QHOSVD}

\addplot [color=mycolor2, line width=2pt, mark=square, mark options={solid, mycolor2}]
  table[row sep=crcr]{%
10	0.40063453\\
11	0.41501898\\
12	0.42052832\\
13	0.43207151\\
14	0.46333504\\
15	0.47305376\\
16	0.48184974\\
17	0.50581617\\
18	0.52276189\\
19	0.53822601\\
20	0.55432683\\
21	0.57611266\\
22	0.59261923\\
23	0.61171802\\
24	0.6345512\\
25	0.65845571\\
26	0.67986525\\
27	0.71087636\\
28	0.72728673\\
29	0.75228931\\
30	0.77221475\\
31	0.80344065\\
32	0.82904578\\
33	0.86044477\\
34	0.88273146\\
35	0.91165313\\
36	0.94956214\\
37	1.00989677\\
38	1.02082899\\
39	1.05829712\\
40	1.06691859\\
41	1.16267856\\
42	1.18782731\\
43	1.21866989\\
44	1.26557582\\
45	1.33666065\\
};
\addlegendentry{L-QHOSVD}

\end{axis}

\begin{axis}[%
width=2.8in,
height=2.4in,
at={(4.2in,0.481in)},
scale only axis,
xmin=8,
xmax=47,
xminorticks=true,
xlabel style={font=\color{white!15!black}},
xlabel={r:},
ymin=0,
ymax=1,
yminorticks=true,
ylabel style={font=\color{white!15!black}},
ylabel={Relative Error},
axis background/.style={fill=white},
legend style={legend cell align=left, align=left, draw=white!15!black}
]
\addplot [color=mycolor1, line width=2pt, mark=triangle, mark options={solid, mycolor1}]
  table[row sep=crcr]{%
10	0.976972295307882\\
11	0.969873930257192\\
12	0.961014368787541\\
13	0.950114068184835\\
14	0.93819551373014\\
15	0.923637680975697\\
16	0.907308879976146\\
17	0.888437391684093\\
18	0.866820297194618\\
19	0.843646909951498\\
20	0.81779672469136\\
21	0.799071878563034\\
22	0.779030038812756\\
23	0.758536012836961\\
24	0.737154822140384\\
25	0.714986434958714\\
26	0.69270029805598\\
27	0.670106532740184\\
28	0.646102264449644\\
29	0.621925555767054\\
30	0.597611816136211\\
31	0.572514743400275\\
32	0.546973752942064\\
33	0.521129838586511\\
34	0.494747334790329\\
35	0.468458349180405\\
36	0.441613146304681\\
37	0.41484926163192\\
38	0.387685780706723\\
39	0.360611061913545\\
40	0.332639534899112\\
41	0.304290652993189\\
42	0.275332856184776\\
43	0.246400784274371\\
44	0.218585447897392\\
45	0.189408596637907\\
};
\addlegendentry{TS-QHOSVD}

\addplot [color=mycolor2, line width=2pt, mark=square, mark options={solid, mycolor2}]
  table[row sep=crcr]{%
10	0.984509908313395\\
11	0.980394971314022\\
12	0.974859863193918\\
13	0.968684891926986\\
14	0.961629656860424\\
15	0.95339165730787\\
16	0.943503231817987\\
17	0.933421514864545\\
18	0.922288670559346\\
19	0.909682427259966\\
20	0.895779810817833\\
21	0.880396905640957\\
22	0.863116638092981\\
23	0.846032180692646\\
24	0.826970657860347\\
25	0.807264739675754\\
26	0.786190640193153\\
27	0.763942035957825\\
28	0.740999513016787\\
29	0.717002931518681\\
30	0.692257360376511\\
31	0.666530378534001\\
32	0.639511225787843\\
33	0.612102462408743\\
34	0.583901522356194\\
35	0.554642993908297\\
36	0.52515294740119\\
37	0.494819420178127\\
38	0.464215227308178\\
39	0.433320066118284\\
40	0.401644260768543\\
41	0.369384755925256\\
42	0.335103724879289\\
43	0.300976559030322\\
44	0.268059229287199\\
45	0.233262032166459\\
};
\addlegendentry{L-QHOSVD}

\end{axis}
\end{tikzpicture}%
    \captionsetup{aboveskip=-1pt, belowskip=-4pt}
    \caption{Comparison of elapsed time and reconstruction error for truncated TS-QHOSVD (Alg. \ref{algo:trun_ts})  and truncated L-QHOSVD (Alg. \ref{algo:stl}) on synthetic quaternion tensor   with varying truncation ranks $[r,r,r,r]$.}
    \label{fig:synthetic_tensor_rank_20}
\end{figure} 
In this example, we construct a      tensor $\zl{T} \in \mathbb{H}^{50 \times 50 \times 50 \times 50}$ as follows: we generate four  random column-orthonormal quaternion matrices $U_s = [u_{s,1},\cdots,u_{s,R}] \in \mathbb{H}^{50 \times R}$ for $s = 1,2,3,4$, and define the tensor as  
\[\zl{T} = \sum_{t = 1}^{R}\nolimits u_{1,t}  \circ u_{2,t}  \circ u_{3,t}  \circ u_{4,t}  + \alpha {\zl E} ,\]
where $\mathcal{E} \in \mathbb{H}^{50\times50\times50\times50}$ represents noise with every element drawn from standard Gaussian distribution, and $\alpha = 1/50^4$ controls noise amplitude. We set $R=10$ and $R = 20$. We compare truncated TS-QHOSVD (Alg. \ref{algo:trun_ts}) with truncated L-QHOSVD (Alg. \ref{algo:stl}) across target multilinear ranks $[r,r,r,r]$, with $r$ ranging from $10$ to $45$. The experimental results, including relative approximation error and elapsed time under various target ranks,   are shown in Fig. \ref{fig:synthetic_tensor} for $R=10$ and Fig. \ref{fig:synthetic_tensor_rank_20} for $R=20$. We   observe that TS-QHOSVD achieves lower reconstruction errors compared to L-QHOSVD across all target ranks. Concerning time, truncated TS-QHOSVD  also performs better, which   stems from its ability to compute the left and right factor matrices in parallel. The time is reduced by $22\%$-$37\%$   as truncation ranks increase.

\paragraph{Example 2: Video compression} 
\begin{figure}[htbp]
    \centering
    \includegraphics[width=\linewidth]{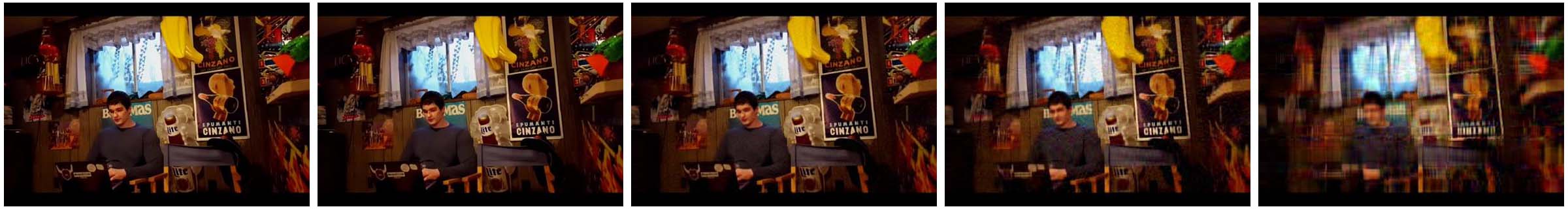}
    \captionsetup{aboveskip=-1pt, belowskip=-4pt}
    \caption{One   frame  from the   video ``5IOQB.mp4''. The first frame: the original frame; second-to-last frames: frames reconstructed by truncated TS-QHOSVD   with truncation ranks   $\alpha\cdot [320,480,500]$, with $\alpha$ varing in $\{0.5, 0.2, 0.1, 0.02\}$.}
    \label{fig:compress_video_1}
\end{figure}\begin{figure}[htbp]
    \centering
%
%
\definecolor{mycolor1}{rgb}{0.87059,0.34510,0.16863}%
\definecolor{mycolor2}{rgb}{0.09412,0.40784,0.69804}%
\definecolor{mycolor3}{rgb}{0.95294,0.63922,0.19608}%
\begin{tikzpicture}[scale=.8]

\begin{axis}[%
width=2.8in,
height=2.4in,
at={(0.5in,0.481in)},
scale only axis,
xmin=0,
xmax=0.53,
xminorticks=true,
xlabel style={font=\color{white!15!black}},
xlabel={truncated-rank ratios:},
ymin=6,
ymax=19,
yminorticks=true,
ylabel style={font=\color{white!15!black}},
ylabel={Time},
axis background/.style={fill=white},
legend style={at={(0.03,0.97)}, anchor=north west, legend cell align=left, align=left, draw=white!15!black}
]
\addplot [color=mycolor1, line width=2pt, mark=triangle, mark options={solid, mycolor1}]
  table[row sep=crcr]{%
0.02	6.62183123\\
0.04	6.89984287\\
0.06	7.16206288\\
0.08	7.37686256\\
0.1	7.41935593\\
0.12	7.68280532\\
0.14	7.92603039\\
0.16	8.02298426\\
0.18	8.23957985\\
0.2	8.69658277\\
0.22	8.64732536\\
0.24	8.99209625\\
0.26	9.18280242\\
0.28	9.59496758\\
0.3	9.72468612\\
0.32	9.90948509\\
0.34	10.23120998\\
0.36	10.0382567\\
0.38	10.10605602\\
0.4	10.55134239\\
0.42	10.74320693\\
0.44	10.91249953\\
0.46	11.16998919\\
0.48	11.69116804\\
0.5	11.76116284\\
};
\addlegendentry{TS-QHOSVD}

\addplot [color=mycolor2, line width=2pt, mark=square, mark options={solid, mycolor2}]
  table[row sep=crcr]{%
0.02	7.23560521\\
0.04	7.53491999\\
0.06	7.9123293\\
0.08	8.08580901\\
0.1	8.23134641\\
0.12	8.55692033\\
0.14	8.68492143\\
0.16	9.15316543\\
0.18	9.43518979\\
0.2	10.13115812\\
0.22	10.15931234\\
0.24	10.51581081\\
0.26	10.92447036\\
0.28	11.43298806\\
0.3	11.79797748\\
0.32	11.88478143\\
0.34	12.52890607\\
0.36	12.34463923\\
0.38	12.77783201\\
0.4	13.12188085\\
0.42	13.58822003\\
0.44	13.85376971\\
0.46	14.37548885\\
0.48	15.00045891\\
0.5	15.29738396\\
};
\addlegendentry{L-QHOSVD}


\end{axis}

\begin{axis}[%
width=2.8in,
height=2.4in,
at={(4.2in,0.481in)},
scale only axis,
xmin=0,
xmax=0.53,
xticklabel style={/pgf/number format/fixed},
xminorticks=true,
xlabel style={font=\color{white!15!black}},
xlabel={truncated-rank ratios:},
ymin=0,
ymax=0.35,
yticklabel style={/pgf/number format/fixed},
yminorticks=true,
ylabel style={font=\color{white!15!black}},
ylabel={Relative Error},
axis background/.style={fill=white},
legend style={legend cell align=left, align=left, draw=white!15!black}
]
\addplot [color=mycolor1, line width=2pt, mark=triangle, mark options={solid, mycolor1}]
  table[row sep=crcr]{%
0.02	0.278617087483099\\
0.04	0.204210110130896\\
0.06	0.161288018789825\\
0.08	0.137230199695058\\
0.1	0.117961721022279\\
0.12	0.104782325458286\\
0.14	0.0937150661513864\\
0.16	0.083470689550021\\
0.18	0.0756512345091055\\
0.2	0.0692536118078867\\
0.22	0.0627910090933972\\
0.24	0.05785029495374\\
0.26	0.052976213077365\\
0.28	0.0490909409196835\\
0.3	0.0452317247695673\\
0.32	0.0421941298844269\\
0.34	0.0393281315300164\\
0.36	0.0365555542789903\\
0.38	0.0341874091720118\\
0.4	0.0317667133842218\\
0.42	0.0298599806184066\\
0.44	0.028024425306189\\
0.46	0.0262081660587061\\
0.48	0.0246252026012579\\
0.5	0.0229822864465164\\
};
\addlegendentry{TS-QHOSVD}

\addplot [color=mycolor2, line width=2pt, mark=square, mark options={solid, mycolor2}]
  table[row sep=crcr]{%
0.02	0.274279155874415\\
0.04	0.202540713256144\\
0.06	0.161279072599682\\
0.08	0.136163588450235\\
0.1	0.118096642676931\\
0.12	0.104989922215041\\
0.14	0.0928729699759952\\
0.16	0.0828227303282066\\
0.18	0.0751622883974763\\
0.2	0.0688442731287014\\
0.22	0.0622135086787614\\
0.24	0.0573869512284895\\
0.26	0.0526503940611833\\
0.28	0.0487971657212014\\
0.3	0.0450542204609303\\
0.32	0.0419527907369188\\
0.34	0.0393542122768929\\
0.36	0.0364737308021273\\
0.38	0.034090067032398\\
0.4	0.0317197379616005\\
0.42	0.0297820062379783\\
0.44	0.0279666045317088\\
0.46	0.0261992165245506\\
0.48	0.0245439944860883\\
0.5	0.0229630797201619\\
};
\addlegendentry{L-QHOSVD}


\end{axis}
\end{tikzpicture}%
    \captionsetup{aboveskip=-1pt, belowskip=-4pt}
    \caption{Comparison of elapsed time and reconstruction error for   truncated TS-HOSVD (Alg. \ref{algo:trun_ts}) and truncated L-HOSVD (Alg. \ref{algo:stl}) with varying truncation ranks  on color video ``5IOQB.mp4'' compression.}
    \label{fig:video1}
\end{figure}

We   test  truncated TS-QHOSVD (Alg. \ref{algo:trun_ts}) and truncated L-QHOSVD (Alg. \ref{algo:stl}) in this example. 
The  test videos ``5IOQB.mp4'' and ``8JTF4.mp4'' were downloaded from the Charades dataset website \url{https://prior.allenai.org/projects/charades} (scaled to $480$p, $13$ GB).
  ``5IOQB.mp4''  consists of $854$ frames, each of size $320\times 480$.
We use 500 frames, resulting into a pure quaternion tensor in $\quat{320\times 480\times 500}$. ``8JTF4.mp4''   has $831$ frames, each of size $360\times 480$, and we use 500 frames, resulting into  a pure quaternion tensor in $\quat{360\times 480\times 500}$.   In the experiment, we set truncation ranks $\alpha\cdot[320,480,500]$, with $\alpha$ varying from $0.02$ to $0.5$.

\begin{figure}[htbp]
    \centering
    \includegraphics[width=\linewidth]{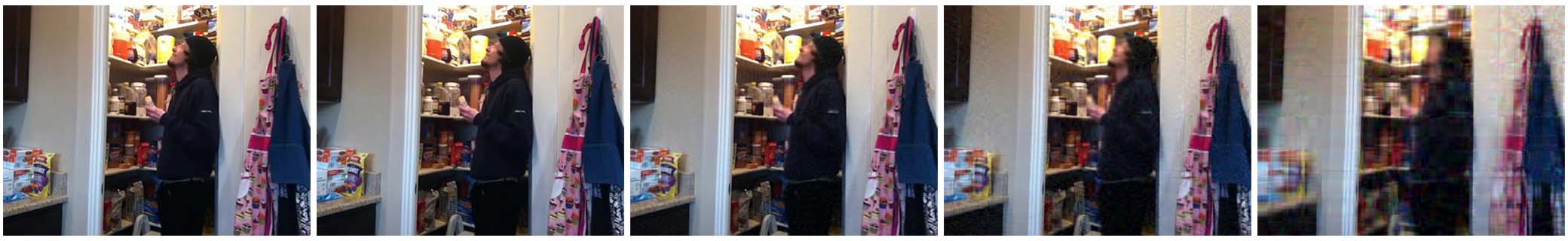}
    \caption{One   frame  from the   video  ``8JTF4.mp4''. The first frame: the original frame; second-to-last frames: frames reconstructed by truncated TS-QHOSVD   with truncation ranks   $\alpha\cdot [320,480,500]$, with $\alpha$ varing in $\{0.5, 0.2, 0.1, 0.02\}$.}
    \label{fig:compress_video_2}
\end{figure}
\begin{figure}[htbp]
    \centering
%
%
\definecolor{mycolor1}{rgb}{0.87059,0.34510,0.16863}%
\definecolor{mycolor2}{rgb}{0.09412,0.40784,0.69804}%
\definecolor{mycolor3}{rgb}{0.95294,0.63922,0.19608}%
\begin{tikzpicture}[scale=.8]

\begin{axis}[%
width=2.8in,
height=2.4in,
at={(0.5in,0.481in)},
scale only axis,
xmin=0,
xmax=0.53,
xminorticks=true,
xlabel style={font=\color{white!15!black}},
xlabel={truncated-rank ratios:},
ymin=6,
ymax=21,
yminorticks=true,
ylabel style={font=\color{white!15!black}},
ylabel={Time},
axis background/.style={fill=white},
legend style={at={(0.03,0.97)}, anchor=north west, legend cell align=left, align=left, draw=white!15!black}
]
\addplot [color=mycolor1, line width=2pt, mark=triangle, mark options={solid, mycolor1}]
  table[row sep=crcr]{%
0.02	7.34974326\\
0.04	7.82779735\\
0.06	8.05492709\\
0.08	8.27702386\\
0.1	8.72241961\\
0.12	8.91988618\\
0.14	9.27292561\\
0.16	9.62091685\\
0.18	10.20891355\\
0.2	10.33026316\\
0.22	10.50383054\\
0.24	11.03723553\\
0.26	11.12745091\\
0.28	11.40537743\\
0.3	11.66051146\\
0.32	12.06796811\\
0.34	12.05373763\\
0.36	12.24456691\\
0.38	12.891666\\
0.4	13.06776741\\
0.42	13.85525935\\
0.44	13.8764109\\
0.46	13.82420062\\
0.48	14.30894125\\
0.5	14.74123414\\
};
\addlegendentry{TS-QHOSVD}

\addplot [color=mycolor2, line width=2pt, mark=square, mark options={solid, mycolor2}]
  table[row sep=crcr]{%
0.02	7.97431845\\
0.04	8.14405902\\
0.06	8.42400298\\
0.08	8.69919148\\
0.1	9.03213041\\
0.12	9.41137613\\
0.14	9.72073225\\
0.16	10.16535594\\
0.18	10.99328342\\
0.2	11.3072469\\
0.22	11.35227513\\
0.24	12.13595081\\
0.26	12.60736254\\
0.28	12.84677153\\
0.3	13.55444023\\
0.32	13.99794712\\
0.34	14.11308005\\
0.36	14.59953854\\
0.38	14.97159145\\
0.4	15.48011933\\
0.42	16.51707859\\
0.44	16.48615022\\
0.46	16.6603878\\
0.48	17.61536819\\
0.5	18.72435788\\
};
\addlegendentry{L-QHOSVD}


\end{axis}

\begin{axis}[%
width=2.8in,
height=2.4in,
at={(4.2in,0.481in)},
scale only axis,
xmin=0,
xmax=0.53,
xminorticks=true,
xlabel style={font=\color{white!15!black}},
xlabel={truncated-rank ratios:},
ymin=0,
ymax=0.2,
yminorticks=true,
yticklabel style={/pgf/number format/fixed},
ylabel style={font=\color{white!15!black}},
ylabel={Relative Error},
axis background/.style={fill=white},
legend style={legend cell align=left, align=left, draw=white!15!black}
]
\addplot [color=mycolor1, line width=2pt, mark=triangle, mark options={solid, mycolor1}]
  table[row sep=crcr]{%
0.02	0.167576778583008\\
0.04	0.11649838491827\\
0.06	0.0927092634566148\\
0.08	0.0765987176428107\\
0.1	0.0644640796651896\\
0.12	0.0565552946045374\\
0.14	0.0498312580804556\\
0.16	0.044786610409755\\
0.18	0.0403769996671782\\
0.2	0.0364482744553833\\
0.22	0.0332846125282767\\
0.24	0.0305582270698515\\
0.26	0.0282128120188525\\
0.28	0.0260932312040474\\
0.3	0.0240562178026735\\
0.32	0.0224511041160912\\
0.34	0.0209334255458017\\
0.36	0.019589026368786\\
0.38	0.0183303537927445\\
0.4	0.017095243991087\\
0.42	0.0160999397636972\\
0.44	0.0151582603677693\\
0.46	0.0143250264659597\\
0.48	0.0135152907682172\\
0.5	0.0127031691662802\\
};
\addlegendentry{TS-QHOSVD}

\addplot [color=mycolor2, line width=2pt, mark=square, mark options={solid, mycolor2}]
  table[row sep=crcr]{%
0.02	0.165298994078464\\
0.04	0.117407572643313\\
0.06	0.0929160630397808\\
0.08	0.0765969519500924\\
0.1	0.0643603781646092\\
0.12	0.0562845064512551\\
0.14	0.0499602097524767\\
0.16	0.0445433838540971\\
0.18	0.0403064717018365\\
0.2	0.0363736140496477\\
0.22	0.0332242000646988\\
0.24	0.0304340064561443\\
0.26	0.0281747548704004\\
0.28	0.0260041858227435\\
0.3	0.0240441116071484\\
0.32	0.0224110793348442\\
0.34	0.0209334607750575\\
0.36	0.0195740445525917\\
0.38	0.0182731004064175\\
0.4	0.0170963909104689\\
0.42	0.0160765792446086\\
0.44	0.015164184818675\\
0.46	0.0143235277799047\\
0.48	0.0135063409993555\\
0.5	0.01269469754077\\
};
\addlegendentry{L-QHOSVD}


\end{axis}
\end{tikzpicture}%
    \captionsetup{aboveskip=-1pt, belowskip=-4pt}
    \caption{Comparison of elapsed time and reconstruction error for   truncated TS-HOSVD (Alg. \ref{algo:trun_ts}) and truncated L-HOSVD (Alg. \ref{algo:stl}) with varying truncation ranks  on color video ``8JTF4.mp4'' compression. }
    \label{fig:video2}
\end{figure}

We evaluate the relative reconstruction error and elapsed time of both methods, with the results presented in Fig. \ref{fig:video1} and Fig. \ref{fig:video2}. As shown in the figures,  TS-QHOSVD achieves approximately a 10\% speedup over L-QHOSVD while maintaining a comparable level of reconstruction error. The visualized reconstructed results (one frame)  by TS-QHOSVD are respectively  illustrated in Fig.~\ref{fig:compress_video_1} and Fig.~\ref{fig:compress_video_2}, which shows that the reconstructed frames well approximated the original ones for a moderate truncated rank.

\paragraph{Example 3: Scientific data: 3D Navier-Stokes Equation} 
In this example, we consider compressing the output of a computational fluid dynamics (CFD) simulation. The simulation is based on a finite element model of the unsteady 3D Navier-Stokes equations for microscopic natural convection, with applications in biological research.
The general model of Navier-Stokes equation consists of a time-dependent continuity equation for conservation of mass:
    \begin{small}
        \begin{align*}
            \frac{\partial p}{\partial t}+\frac{\partial(\rho u)}{\partial x}+\frac{\partial(\rho v)}{\partial y}+\frac{\partial(\rho w)}{\partial z}=0,
        \end{align*}
    \end{small}
    three time-dependent conservation of momentum equations:
    \begin{small}
        \begin{align*}
            &\frac{\partial(\rho u)}{\partial t}+\frac{\partial(\rho u^2)}{\partial x}+\frac{\partial(\rho uv)}{\partial y}+\frac{\partial(\rho uw)}{\partial z}=-\frac{\partial\rho}{\partial x}+\frac{1}{Re_{r}}[\frac{\partial\tau_{xx}}{\partial x}+\frac{\partial\tau_{xy}}{\partial y}+\frac{\partial\tau_{xz}}{\partial z}] \\
            &\frac{\partial(\rho v)}{\partial t}+\frac{\partial(\rho uv)}{\partial x}+\frac{\partial(\rho v^2)}{\partial y}+\frac{\partial(\rho vw)}{\partial z}=-\frac{\partial\rho}{\partial y}+\frac1{Re_r}[\frac{\partial\tau_{xy}}{\partial x}+\frac{\partial\tau_{yy}}{\partial y}+\frac{\partial\tau_{yz}}{\partial z}] \\
            &\frac{\partial(\rho w)}{\partial t}+\frac{\partial(\rho uw)}{\partial x}+\frac{\partial(\rho vw)}{\partial y}+\frac{\partial(\rho w^2)}{\partial z}=-\frac{\partial\rho}{\partial z}+\frac1{Re_r}[\frac{\partial\tau_{xz}}{\partial x}+\frac{\partial\tau_{yz}}{\partial y}+\frac{\partial\tau_{zz}}{\partial z}],
        \end{align*}
    \end{small}
    and a time-dependent conservation of energy equation:
    \begin{small}
        \begin{equation*}\begin{aligned}
            &\frac{\partial(E_t)}{\partial t}+\frac{\partial(uE_t)}{\partial x}+\frac{\partial(vE_t)}{\partial y}+\frac{\partial(wE_t)}{\partial z}=-\frac{\partial(up)}{\partial x}-\frac{\partial(vp)}{\partial y}-\frac{\partial(wp)}{\partial z}-\frac1{Re_rPr_r}[\frac{\partial q_x}{\partial x}+\frac{\partial q_y}{\partial y}+\frac{\partial q_z}{\partial z}]\\&+\frac1{Re_r}[\frac\partial{\partial x}(u\tau_{xx}+v\tau_{xy}+w\tau_{xz})+\frac\partial{\partial y}(u\tau_{xy}+v\tau_{yy}+w\tau_{yz})+\frac\partial{\partial z}(u\tau_{xz}+v\tau_{yz}+w\tau_{zz})].
        \end{aligned}\end{equation*} 
    \end{small}
    The data was collected using the QuickerSim CFD Toolbox for MATLAB.
    
    \begin{figure}[htbp]
    \centering
%
%
\definecolor{mycolor1}{rgb}{0.87059,0.34510,0.16863}%
\definecolor{mycolor2}{rgb}{0.09412,0.40784,0.69804}%
\definecolor{mycolor3}{rgb}{0.00392,0.54118,0.40392}%
\begin{tikzpicture}[scale=.8]

\begin{axis}[%
width=2.8in,
height=2.4in,
at={(0.5in,0.481in)},
scale only axis,
xmin=0,
xmax=53,
xminorticks=true,
xlabel style={font=\color{white!15!black}},
xlabel={r:},
ymin=100,
ymax=350,
yminorticks=true,
ylabel style={font=\color{white!15!black}},
ylabel={Time},
axis background/.style={fill=white},
legend style={at={(0.03,0.97)}, anchor=north west, legend cell align=left, align=left, draw=white!15!black}
]
\addplot [color=mycolor1, line width=2pt, mark=triangle, mark options={solid, mycolor1}]
  table[row sep=crcr]{%
5	123.5001548\\
10	124.5012992\\
15	131.5835873\\
20	131.7268966\\
25	136.6342892\\
30	135.7141828\\
35	140.3053548\\
40	147.5479492\\
45	153.3746598\\
50	160.2559764\\
};
\addlegendentry{TS-QHOSVD}

\addplot [color=mycolor2, line width=2pt, mark=square, mark options={solid, mycolor2}]
  table[row sep=crcr]{%
5	240.2708538\\
10	237.2820208\\
15	236.6911586\\
20	240.4245538\\
25	251.2777863\\
30	245.1365692\\
35	270.8569553\\
40	272.7314581\\
45	303.6184206\\
50	338.8268441\\
};
\addlegendentry{L-QHOSVD}

\addplot [color=mycolor3, line width=2pt, mark=diamond, mark options={solid, mycolor3}]
  table[row sep=crcr]{%
5	157.7074351\\
10	172.0048598\\
15	181.2985025\\
20	186.5099416\\
25	192.8096128\\
30	202.0945492\\
35	208.7209594\\
40	219.6069516\\
45	223.7771009\\
50	227.5338393\\
};
\addlegendentry{seq-HOSVD}

\end{axis}

\begin{axis}[%
width=2.8in,
height=2.4in,
at={(4.2in,0.481in)},
scale only axis,
xmin=0,
xmax=53,
xminorticks=true,
xlabel style={font=\color{white!15!black}},
xlabel={r:},
ymin=0,
ymax=0.022,
yminorticks=true,
ylabel style={font=\color{white!15!black}},
ylabel={Relative Error},
axis background/.style={fill=white},
legend style={legend cell align=left, align=left, draw=white!15!black}
]
\addplot [color=mycolor1, line width=2pt, mark=triangle, mark options={solid, mycolor1}]
  table[row sep=crcr]{%
5	0.0104610929639913\\
10	0.00537082785054008\\
15	0.00303121323724334\\
20	0.00192208397092344\\
25	0.00128232058633831\\
30	0.000916436948291146\\
35	0.000685212992773732\\
40	0.000533965187000007\\
45	0.000433948762666497\\
50	0.000359931188363598\\
};
\addlegendentry{TS-QHOSVD}

\addplot [color=mycolor2, line width=2pt, mark=square, mark options={solid, mycolor2}]
  table[row sep=crcr]{%
5	0.0117623911960026\\
10	0.00616063443974177\\
15	0.00384185088106806\\
20	0.00266322365071924\\
25	0.001819293376632\\
30	0.00139699557904561\\
35	0.00119359680233259\\
40	0.000945620529355834\\
45	0.000790083934704332\\
50	0.000692466020585201\\
};
\addlegendentry{L-QHOSVD}

\addplot [color=mycolor3, line width=2pt, mark=diamond, mark options={solid, mycolor3}]
  table[row sep=crcr]{%
5	0.0201900878861834\\
10	0.0107563060185293\\
15	0.0070617052372834\\
20	0.0049148284756134\\
25	0.00377568158694744\\
30	0.00296973501698927\\
35	0.00237179188042504\\
40	0.00196069035334028\\
45	0.00165003523247044\\
50	0.00140133926300692\\
};
\addlegendentry{seq-HOSVD}

\end{axis}
\end{tikzpicture}%
    \captionsetup{aboveskip=-1pt, belowskip=-4pt}
    \caption{Comparison of elapsed time and reconstruction error for   truncated TS-HOSVD (Alg. \ref{algo:trun_ts}), L-HOSVD (Alg. \ref{algo:stl}), and sequential HOSVD (Alg. \ref{algo:sequential_hosvd}) on 3D NS equation data compression.}
    \label{fig:cfd_compress}
    \end{figure}

    In this example, the simulation domain contains $20914$ spatial nodes, each with a time-dependent 3D velocity vector. The velocity field is sampled at $200$ time instants. Additionally, the kinematic viscosity parameter $k$ is varied from \(2.01~{\rm mm}^2/{\rm s}\) to \(4.00~{\rm mm}^2/{\rm s}\) in increments of \(0.01\), yielding $200$ evenly spaced values. These correspond to the third mode of the resulting quaternion tensor, capturing the effect of viscosity variations on the velocity field, yielding a pure quaternion tensor of size \(20914 \times 200 \times 200\) ($18.69~{\rm GB}$ in memory, $4.86$GB in disk), where each frontal slice corresponds to a distinct viscosity value and encodes the full temporal evolution of the velocity field. 
    Each data entry $t_{p,t,k}$ in the tensor corresponds to a pure quaternion that encodes the 3D velocity vector:
    \[
    t_{p,t,k} = v_x(p,t,k){\bm i} + v_y(p,t,k){\bm j} + v_z(p,t,k){\bm k},
    \]
    where \(v = (v_x, v_y, v_z)\) is the velocity vector field, indexed by spatial node \(p\), time \(t\), and kinematic viscosity parameter \(k\).
 
    We apply truncated TS-QHOSVD (Alg. \ref{algo:trun_ts})  and truncated L-QHOSVD (Alg. \ref{algo:stl}) with different truncated ranks $[r,r,r]$  for $r = 5:5:50$. We also consider the sequential HOSVD  (Alg. \ref{algo:sequential_hosvd}) with truncated rank $[r,r,r,3]$, by treating the quaternion tensor as a fourth-order real tensor.   The results are shown in Fig~\ref{fig:cfd_compress}, from which we observe  that   TS-QHOSVD achieves a $75.0\% $ reduction in reconstruction error compared to sequential HOSVD, under the same truncation conditions, and TS-QHOSVD slightly outperforms L-QHOSVD. In addition, TS-QHOSVD demonstrates   speed advantages over the compared methods. At a truncation rank of \([30,30,30]\), the original $4.86$ GB dataset is compressed by TS-QHOSVD to $12.0$ MB(compression ratio is $99.76\%$), while retaining approximately $99.91\%$ of the relative accuracy. To  visualize it, we illustrate the shear rates computed from velocity field in Fig.~\ref{cfd}, where the first row is the original share rates, while the second row is the reconstructed ones. We observe that the reconstructed ones  are closely match the real data.  

    \begin{figure}[htbp]
        \centering
        \includegraphics[width=\linewidth]{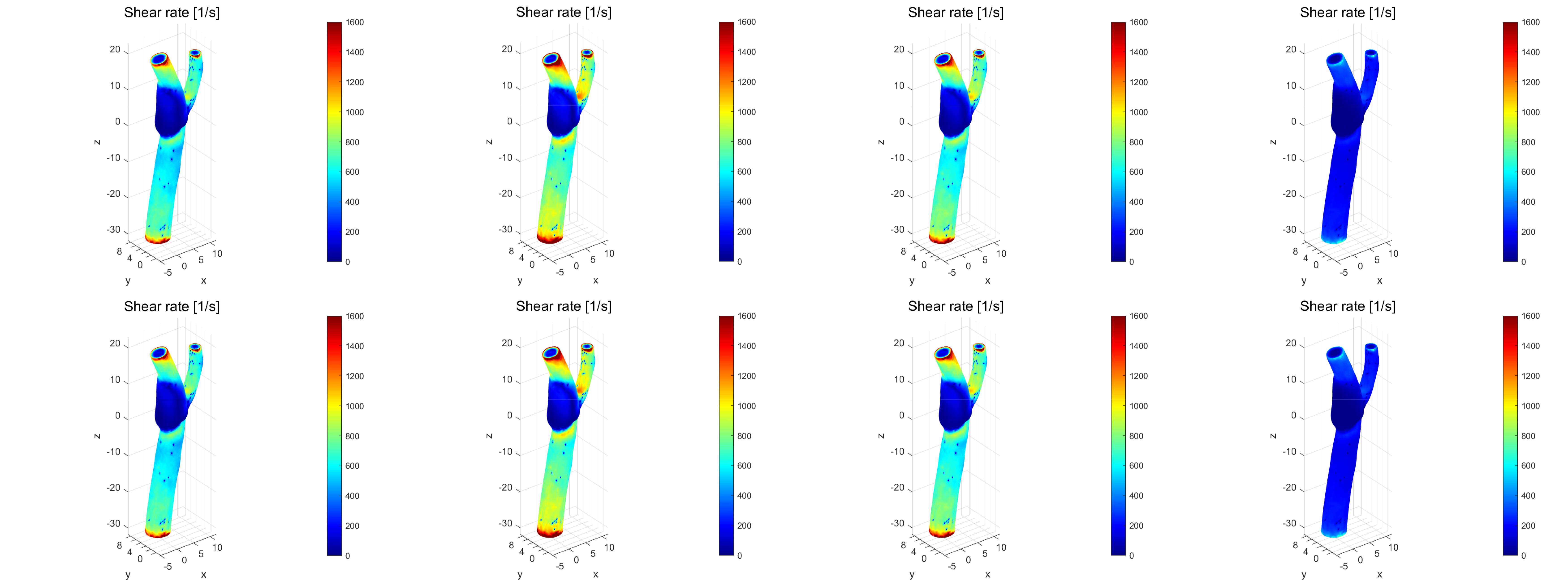}
        \captionsetup{aboveskip=-1pt, belowskip=-4pt}
        \caption{Reconstructed data results using truncated TS-HOSVD. The first row shows the original share rates, while the second row shows the reconstructed ones. The first to fourth columns correspond to viscosity values of 2.5, 3, 3.5, and 4, with the corresponding steps being 7th, 11th, 17th, and 23th, respectively.}
        \label{cfd}
    \end{figure}




 \paragraph{Example 4: Scientific data: Lorenz system}

In this example, we consider compressing data generated from the Lorenz attractor. The Lorenz system is governed by the following set of ordinary differential equations:

$$
\frac{dx}{dt} = \alpha(y - x), \quad \frac{dy}{dt} = x(\rho - z) - y, \quad \frac{dz}{dt} = xy - \beta z,
$$

where $\alpha$, $\beta$, and $\rho$ are positive constants. To ensure that the system exhibits chaotic behavior, we set the parameters as $\alpha = 10$, $\beta = \frac{8}{3}$, and $\rho = 28$. The system trajectories are numerically computed using MATLAB's built-in ODE solver \texttt{ode45}.

We generate $10^8$ trajectory points, which are reshaped into a five-dimensional tensor of size $100 \times 100 \times 100 \times 100 \times 3$, representing spatial-temporal evolution and vector components $(x, y, z)$. This tensor is then treated as a four-dimensional pure quaternion tensor $\zl T \in \mathbb{H}^{100 \times 100 \times 100 \times 100}$ ($2.23{\rm GB}$ in memory, $2.15~{\rm GB}$ in disk).

We compare   truncated TS-QHOSVD (Alg. \ref{algo:trun_ts}), L-QHOSVD (Alg. \ref{algo:stl}), and sequential HOSVD (Alg. \ref{algo:sequential_hosvd}). 
 The reconstruction errors and elapsed times are summarized in Fig.~\ref{fig:lorentz}. The results clearly demonstrate that     TS-QHOSVD and L-QHOSVD  both outperform the traditional sequential HOSVD in terms of reconstruction accuracy. Moreover, TS-QHOSVD again achieves the shortest elapsed time among all methods. 
\begin{figure}[htbp]
    \centering
%
%
\definecolor{mycolor1}{rgb}{0.87059,0.34510,0.16863}%
\definecolor{mycolor2}{rgb}{0.09412,0.40784,0.69804}%
\definecolor{mycolor3}{rgb}{0.95294,0.63922,0.19608}%
\definecolor{mycolor4}{rgb}{0.00392,0.54118,0.40392}%
\begin{tikzpicture}[scale=.8]

\begin{axis}[%
width=2.8in,
height=2.4in,
at={(0.5in,0.481in)},
scale only axis,
xmin=40,
xmax=80,
xminorticks=true,
xlabel style={font=\color{white!15!black}},
xlabel={r:},
ymin=6,
ymax=20,
yminorticks=true,
ylabel style={font=\color{white!15!black}},
ylabel={Time},
axis background/.style={fill=white},
legend style={at={(0.03,0.97)}, anchor=north west,legend cell align=left, align=left, draw=white!15!black, fill = white, fill opacity=0.8, draw=none}
]
\addplot [color=mycolor1, line width=2pt, mark=triangle, mark options={solid, mycolor1}]
  table[row sep=crcr]{%
40	6.74105624\\
42	7.05236596\\
44	7.09035556\\
46	7.54703324\\
48	7.69694075\\
50	8.02705808\\
52	7.93903266\\
54	8.44770231\\
56	8.46878503\\
58	8.90133591\\
60	8.89972287\\
62	9.35093071\\
64	10.41126974\\
66	9.9174971\\
68	9.98891246\\
70	10.52181258\\
72	10.57315265\\
74	11.05780579\\
76	11.0237952\\
78	11.79643091\\
80	12.00783473\\
};
\addlegendentry{TS-QHOSVD}

\addplot [color=mycolor2, line width=2pt, mark=square, mark options={solid, mycolor2}]
  table[row sep=crcr]{%
40	8.21926824\\
42	8.60923064\\
44	8.75886242\\
46	9.43794898\\
48	9.59776122\\
50	10.20132748\\
52	10.25955483\\
54	11.15331459\\
56	11.24720608\\
58	11.92866372\\
60	12.12722163\\
62	12.74042319\\
64	13.87870518\\
66	13.79362809\\
68	14.10316968\\
70	14.8881188\\
72	15.43092724\\
74	16.27224354\\
76	16.52182763\\
78	17.64915931\\
80	18.45959618\\
};
\addlegendentry{L-QHOSVD}


\addplot [color=mycolor4, line width=2pt, mark=diamond, mark options={solid, mycolor4}]
  table[row sep=crcr]{%
40	8.34663568\\
42	8.60060846\\
44	8.8166648\\
46	9.0766453\\
48	9.32238348\\
50	9.54755535\\
52	9.8729998\\
54	10.15105371\\
56	10.50262334\\
58	10.68811223\\
60	11.06299088\\
62	11.38131759\\
64	12.48407431\\
66	12.08073468\\
68	12.4309673\\
70	12.79915256\\
72	13.30349648\\
74	13.57235274\\
76	14.05703701\\
78	14.43568435\\
80	14.99820257\\
};
\addlegendentry{seq-HOSVD}

\end{axis}

\begin{axis}[%
width=2.8in,
height=2.4in,
at={(4.2in,0.481in)},
scale only axis,
xmin=40,
xmax=80,
xminorticks=true,
xlabel style={font=\color{white!15!black}},
xlabel={r:},
ymin=0.3,
ymax=0.52,
yminorticks=true,
ylabel style={font=\color{white!15!black}},
ylabel={Relative Error},
axis background/.style={fill=white},
legend style={legend cell align=left, align=left, draw=white!15!black, fill = white, fill opacity=0.8, draw=none}
]
\addplot [color=mycolor1, line width=2pt, mark=triangle, mark options={solid, mycolor1}]
  table[row sep=crcr]{%
40	0.494346543268192\\
42	0.490201146343549\\
44	0.485745885550417\\
46	0.480950126156879\\
48	0.475849917031457\\
50	0.470379616793584\\
52	0.464472469652644\\
54	0.458288791165586\\
56	0.45179274146189\\
58	0.444798925890639\\
60	0.437366695289192\\
62	0.42937346780881\\
64	0.420982680066767\\
66	0.411962006971016\\
68	0.402390734972645\\
70	0.391987752482492\\
72	0.38119602413075\\
74	0.369602049942446\\
76	0.357436515873216\\
78	0.344209265626778\\
80	0.330067441865736\\
};
\addlegendentry{TS-QHOSVD}

\addplot [color=mycolor2, line width=2pt, mark=square, mark options={solid, mycolor2}]
  table[row sep=crcr]{%
40	0.493138060542658\\
42	0.488925181005352\\
44	0.484351664958237\\
46	0.479521411513334\\
48	0.474371533735868\\
50	0.468916337037214\\
52	0.463082206684902\\
54	0.456784249838404\\
56	0.450085844597077\\
58	0.443124352055222\\
60	0.435501449480928\\
62	0.427461816847321\\
64	0.418959949450409\\
66	0.409865358382291\\
68	0.400266218980256\\
70	0.390071076232039\\
72	0.379220021336877\\
74	0.367554311595426\\
76	0.355366268422847\\
78	0.342088295260373\\
80	0.327859057759052\\
};
\addlegendentry{L-QHOSVD}


\addplot [color=mycolor4, line width=2pt, mark=diamond, mark options={solid, mycolor4}]
  table[row sep=crcr]{%
40	0.498483490538544\\
42	0.49474392014438\\
44	0.490738914348041\\
46	0.486414290052789\\
48	0.481763687068332\\
50	0.476808774688655\\
52	0.471495592789771\\
54	0.465801043911422\\
56	0.459711320956562\\
58	0.453229411502076\\
60	0.446305479857063\\
62	0.43889089399023\\
64	0.430987843920556\\
66	0.42252874153035\\
68	0.413522653087333\\
70	0.403863315408132\\
72	0.393556405298044\\
74	0.382483899181832\\
76	0.370589273073143\\
78	0.357787137038041\\
80	0.344002130659407\\
};
\addlegendentry{seq-HOSVD}

\end{axis}
\end{tikzpicture}%
    \captionsetup{aboveskip=-1pt, belowskip=-4pt}
    \caption{Comparison of elapsed time and reconstruction error for   truncated TS-HOSVD (Alg. \ref{algo:trun_ts}), L-HOSVD (Alg. \ref{algo:stl}), and sequential HOSVD (Alg. \ref{algo:sequential_hosvd}) on  on the Lorenz system compression.}
    \label{fig:lorentz}
\end{figure}

\paragraph{Example 5: Video denoising}
In this example, we compare rank truncated TS-QHOSVD (Alg. \ref{algo:trun_ts}) with the denoising strategy of hard-thresholding   L-QHOSVD \cite[Sect. 5.2]{miao2023quat} on color video denoising. \cite{miao2023quat}'s strategy is as follows: first, it computes the full L-QHOSVD of the tensor $\zl{T}$ (see Sect. \ref{sec:lqhosvd}). Then  it shrinks the core tensor $\zl{S}$ to obtain the new core $\hat{\zl{S}}$, with $
\hat{\zl S}_{ijk} = \begin{cases}
    {\zl S}_{ijk}, & |{\zl S}_{ijk}| > \tau \\
    0, & \text{otherwise}
\end{cases}
$;  here  $\tau= \eta \sigma \sqrt{\log(2I_1I_2I_3)}$,  where $\sigma$ denotes the noise level and $\eta$ is a tuning parameter.  Finally, the denoised tensor is given by $\hat{\zl{T}} = \jz{U}_1 \times^L_1 \jz{U}_2 \times^L_2 \jz{U}_3 \times^L_3 \hat{\zl{S}}$. Note that $\hat{\zl{S}}$ has the same size as $\zl{S}$. 
 
We use the test video ``akiyo'', a YUV video downloaded from (\url{http://trace.eas.asu.edu/yuv/index.html}). The video contains $300$ color frames, each of size $288 \times 352$. By treating the RGB channels as the imaginary components of a quaternion, we construct a pure quaternion tensor $\zl{T} \in \quat{288 \times 352 \times 300}$.   Gaussian noise ($\sigma = 30$) was added to the video.

As the two methods use different strategies, to   ensure a fair comparison, we control the number of preserved (non-zero) elements in the core tensor and fator matrices to be approximately the same, namely, to compare the denoising ability at the same level of storage cost. Each experiment is repeated 10 times, and the results are averaged. we report reconstructed error, PSNR (Peak Signal-to-Noise Ratio), SSIM (Structural Similarity Index Measure), number of preserved elements, and elapsed time   in Table~\ref{akiyo denoising table}. The visual denoising results (one frame) are presented in Fig.~\ref{fig:denoisy}.

\begin{figure}[htbp]
    \centering
    \includegraphics[width=\linewidth]{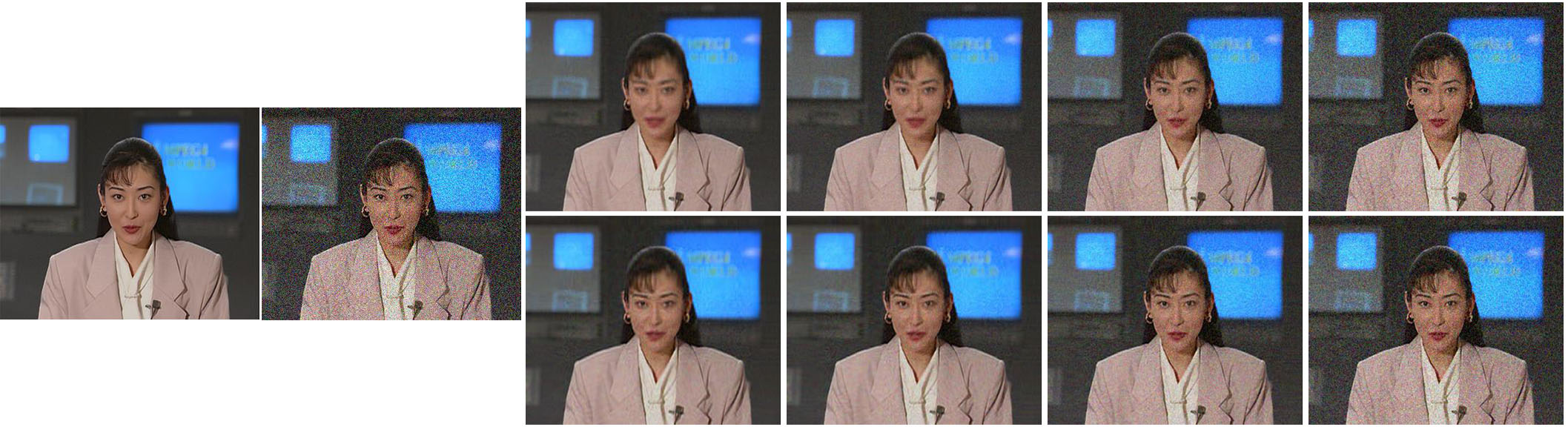}
    \captionsetup{aboveskip=-1pt, belowskip=-4pt}
    \caption{One frame of the denoising results.  The first is the original clean frame, while the second   displays the corresponding noisy frame.  Remaining figures in the first  row: hard-thresholded   L-QHOSVD with $\eta$ values of $0.0025$, $ 0.002$, $0.0015$, and $0.001$. Bottom row: rank-truncated TS-QHOSVD with corresponding truncation ranks to achieve similar preserved elements.  }
    \label{fig:denoisy}
\end{figure}

\begin{table}[htbp]
    \centering
    \captionsetup{aboveskip=-1pt, belowskip=-4pt}
    \caption{Denoising results on color video ``akiyo''; ``num.'' denotes the number of preserved elements.}
    \label{akiyo denoising table}
\resizebox{\textwidth}{!}{
    \begin{mytabular1}{cccccccccccc}
        \toprule[1pt]
        \multicolumn{6}{c}{hard-thresholding L-QHOSVD \cite[Sect. 5.2]{miao2023quat}} & \multicolumn{6}{c}{truncated TS-QHOSVD (Alg. \ref{algo:trun_ts})}  \\
        \cline{1-12}
        $\eta$ & num. & time & psnr & ssim & re.err & truncated size & num. & time & psnr & ssim & re.err\\           
        \midrule[1pt]

        $\eta = 0.0025$ & 441564.2 & 14.05 & $\bm{32.93}$ & $\bm{0.85}$ & $\bm{0.0497}$ & [70,75,70] & $\bm{435060}$ & $\bm{4.31}$ &  32.07 & 0.85 & 0.0548 \\

        $\eta = 0.002$ & 1077309.2 & 14.09 & 29.46 & 0.63 & 0.0741 & [95,100,95] & $\bm{993560}$ & $\bm{4.71}$ & $\bm{31.58}$ & $\bm{0.79}$ & $\bm{0.0580}$ \\

        $\eta = 0.0015$ & 5293946.2 & 14.17 & 23.54 & 0.33 & 0.1464 & [170,175,170] &  $\bm{5219060}$ & $\bm{6.30}$ & $\bm{26.37}$ & $\bm{0.48}$ & $\bm{0.1057}$ \\

        $\eta = 0.001$ & 17580931.2 & 14.27 & 19.67 & 0.21 & 0.290 & [230,300,250] & $\bm{174968400}$ & $\bm{9.16}$ & $\bm{21.21}$ & $\bm{0.25}$ & $\bm{0.1914}$ \\ 
        \bottomrule[1pt]              
    \end{mytabular1}}\\
    Boldface means that the best among all the   methods
\end{table}

From the results, we observe that under similar storage situations, truncated TS-QHOSVD outperforms     L-QHOSVD in terms of speed, which still stems from its parallelizable nature. As more components are retained, TS-QHOSVD demonstrates improved denoising performance. Notably, even when the number of preserved elements of TS-QHOSVD are less than that of L-QHOSVD (see columns $2$ and $8$ in Table \ref{akiyo denoising table}), it still yields better results  in robustness and quality (in terms of PSNR, SSIM, and relative error). This highlights TS-QHOSVD's resilience to rank parameter tuning and its potential in practical denoising tasks.

\section{Conclusions}\label{conclusions} 
Quaternion HOSVD was recently proposed for quaternion tensors \cite{miao2023quat}, in the fashion of the sequential HOSVD \cite{vannieuwenhoven2012new}. To take advantage of model computing architechture,  this work introduces a two-sided quaternion HOSVD (TS-QHOSVD) that can be parallelized on two processors. We show that TS-QHOSVD      preserves the   HOSVD ordering property,  inherits the orthogonality property     at the first and the last modes,  and   satisfies the weak orthogonality     at all modes.  The rank truncated TS-QHOSVD is also developed and its error bound, which is controlled by the tail energy, has been established. We have applied TS-QHOSVD to   color video denoising   as well as scientific data compression (3D Navier-Stokes equation and Lorentz system) tasks to demonstrate its efficacy.

{\scriptsize\paragraph*{Acknowledgement}   This work was funded by the    National Natural Science Foundation of China No. 12171105, the special foundation for Guangxi Ba Gui Scholars No. GXR-6BG2424008, and Guangxi Science and Technology Plan Project for Research Bases and Talents of China  No.  AD22080047.	}

\noindent{\footnotesize {\bf Conflict of interest} The authors have no conflict of interest to declare that are relevant to the content of this
article.}

\noindent {\footnotesize {\bf Ethical approval} The datasets and algorithms generated during the current study are available from the corresponding author on reasonable request.}

\bibliographystyle{plain}
\bibliography{ref2}

\begin{thebibliography}{10}

\bibitem{chen2022color}
J.~Chen and M.~K. Ng.
\newblock Color image inpainting via robust pure quaternion matrix completion: Error bound and weighted loss.
\newblock {\em SIAM J. Imag. Sci.}, 15(3):1469--1498, 2022.

\bibitem{de2008decompositions}
L.~De~Lathauwer.
\newblock Decompositions of a higher-order tensor in block terms—part {I}: Lemmas for partitioned matrices.
\newblock {\em SIAM J. Matrix Anal. Appl.}, 30(3):1022--1032, 2008.

\bibitem{hosvd}
L.~De~Lathauwer, B.~De~Moor, and J.~Vandewalle.
\newblock A multilinear singular value decomposition.
\newblock {\em SIAM J. Matrix Anal. Appl.}, 21(4):1253--1278, 2000.

\bibitem{flamant2024multilinear}
J.~Flamant, X.~Luciani, S.~Miron, and Y.~Zniyed.
\newblock Multilinear analysis of quaternion arrays: theory and computation.
\newblock {\em arXiv preprint arXiv:2412.05409}, 2024.

\bibitem{he2023eigenvalues}
Z.-H. He, X.-X. Wang, and Y.-F. Zhao.
\newblock Eigenvalues of quaternion tensors with applications to color video processing.
\newblock {\em J. Sci. Comput.}, 94(1):1, 2023.

\bibitem{osimone2023low}
O.~Imhogiemhe, J.~Flamant, X.~Luciani, Y.~Zniyed, and S.~Miron.
\newblock Low-rank tensor decompositions for quaternion multiway arrays.
\newblock In {\em ICASSP 2023-2023 IEEE International Conference on Acoustics, Speech and Signal Processing (ICASSP)}, pages 1--5. IEEE, 2023.

\bibitem{jia2022non}
Z.~Jia, Q.~Jin, M.~K. Ng, and X.-L. Zhao.
\newblock Non-local robust quaternion matrix completion for large-scale color image and video inpainting.
\newblock {\em IEEE Trans. Image Process.}, 31:3868--3883, 2022.

\bibitem{jia2021structure}
Z.~Jia and M.~K. Ng.
\newblock Structure preserving quaternion generalized minimal residual method.
\newblock {\em SIAM J. Matrix Anal. Appl.}, 42(2):616--634, 2021.

\bibitem{jia2019lanczos}
Z.~Jia, M.~K. Ng, and G.-J. Song.
\newblock Lanczos method for large-scale quaternion singular value decomposition.
\newblock {\em Numer. Algorithms}, 82:699--717, 2019.

\bibitem{jia2019robust}
Z.~Jia, M.~K. Ng, and G.-J. Song.
\newblock Robust quaternion matrix completion with applications to image inpainting.
\newblock {\em Numer. Linear Algebra Appl.}, 26(4):e2245, 2019.

\bibitem{jia2018new}
Z.~Jia, M.~Wei, M.-X. Zhao, and Y.~Chen.
\newblock A new real structure-preserving quaternion qr algorithm.
\newblock {\em J. Comput. Applied Math.}, 343:26--48, 2018.

\bibitem{kilmer2011factorization}
M.~E. Kilmer and C.~D. Martin.
\newblock Factorization strategies for third-order tensors.
\newblock {\em Linear Algebra Appl.}, 435(3):641--658, 2011.

\bibitem{kolda2009tensor}
T.~G. Kolda and B.~W. Bader.
\newblock Tensor decompositions and applications.
\newblock {\em SIAM Rev.}, 51(3):455--500, 2009.

\bibitem{miao2020quaternion}
J.~Miao and K.~I. Kou.
\newblock Quaternion-based bilinear factor matrix norm minimization for color image inpainting.
\newblock {\em IEEE Trans. Signal Process.}, 68:5617--5631, 2020.

\bibitem{miao2023qt-svd}
J.~Miao and K.~I. Kou.
\newblock Quaternion tensor singular value decomposition using a flexible transform-based approach.
\newblock {\em Signal Process.}, 206:108910, 2023.

\bibitem{miao2023quat}
J.~Miao, K.~I. Kou, D.~Cheng, and W.~Liu.
\newblock Quaternion higher-order singular value decomposition and its applications in color image processing.
\newblock {\em Inf. Fusion}, 92:139--153, 2023.

\bibitem{miao2020low}
J.~Miao, K.~I. Kou, and W.~Liu.
\newblock Low-rank quaternion tensor completion for recovering color videos and images.
\newblock {\em Pattern Recognit.}, 107:107505, 2020.

\bibitem{miao2022quat}
J.~Miao, K.~I. Kou, L.~Yang, and D.~Cheng.
\newblock Quaternion tensor train rank minimization with sparse regularization in a transformed domain for quaternion tensor completion, 2022.

\bibitem{miron2023quaternions}
S.~Miron, J.~Flamant, N.~Le~Bihan, P.~Chainais, and D.~Brie.
\newblock Quaternions in signal and image processing: A comprehensive and objective overview.
\newblock {\em IEEE Signal Process. Mag.}, 40(6):26--40, 2023.

\bibitem{oseledets2011tensor}
I.~V. Oseledets.
\newblock Tensor-train decomposition.
\newblock {\em SIAM J. Sci. Comput.}, 33(5):2295--2317, 2011.

\bibitem{qi2022quaternion}
L.~Qi, Z.~Luo, Q.-W. Wang, and X.~Zhang.
\newblock Quaternion matrix optimization: Motivation and analysis.
\newblock {\em J. Optim. Theory Appl.}, 193(1-3):621--648, 2022.

\bibitem{qin2022sigular}
Z.~Qin, Z.~Ming, and L.~Zhang.
\newblock Singular value decomposition of third order quaternion tensors.
\newblock {\em Applied Math. Lett.}, 123:107597, 2022.

\bibitem{rodman2014topics}
L.~Rodman.
\newblock {\em Topics in Quaternion Linear Algebra}.
\newblock Princeton University Press, 2014.

\bibitem{schulz2014using}
D.~Schulz and R.~S. Thoma.
\newblock Using quaternion-valued linear algebra.
\newblock {\em arXiv preprint, arXiv:1311.7488}, 2014.

\bibitem{vannieuwenhoven2012new}
N.~Vannieuwenhoven, R.~Vandebril, and K.~Meerbergen.
\newblock A new truncation strategy for the higher-order singular value decomposition.
\newblock {\em SIAM J. Sci. Comput.}, 34(2):A1027--A1052, 2012.

\bibitem{wei2018quaternion}
M.~Wei, Y.~Li, F.~Zhang, and J.~Zhao.
\newblock {\em Quaternion matrix computations}.
\newblock Nova Science Publishers, Incorporated, 2018.

\bibitem{xu2015optimization}
D.~Xu, Y.~Xia, and D.~P. Mandic.
\newblock Optimization in quaternion dynamic systems: Gradient, hessian, and learning algorithms.
\newblock {\em IEEE Trans. Neural Netw. Learn. Syst.}, 27(2):249--261, 2015.

\bibitem{zhang1997quaternions}
F.~Zhang.
\newblock Quaternions and matrices of quaternions.
\newblock {\em Linear Algebra Appl.}, 251:21--57, 1997.

\bibitem{zhao2016tensor}
Q.~Zhao, G.~Zhou, S.~Xie, L.~Zhang, and A.~Cichocki.
\newblock Tensor ring decomposition.
\newblock {\em arXiv preprint arXiv:1606.05535}, 2016.

\bibitem{zheng2023approximation}
M.-M. Zheng and G.~Ni.
\newblock Approximation strategy based on the t-product for third-order quaternion tensors with application to color video compression.
\newblock {\em Applied Math. Lett.}, 140:108587, 2023.

\bibitem{zou2016quat}
C.~Zou, K.~I. Kou, and Y.~Wang.
\newblock Quaternion collaborative and sparse representation with application to color face recognition.
\newblock {\em IEEE Trans. Image Process.}, 25(7):3287--3302, 2016.

\bibitem{zou2021quat}
C.~Zou, K.~I. Kou, Y.~Wang, and Y.-Y. Tang.
\newblock Quaternion block sparse representation for signal recovery and classification.
\newblock {\em Signal Process.}, 179:107849, 2021.

\end{thebibliography}

\appendix
\renewcommand{\theequation}{A\arabic{equation}}
\setcounter{equation}{0}

\small
\section{Proof of Lemma~\ref{lem:mode-k_unfolding}} \label{sec:proof_lem_k_mode_right_unfolding}
We only present the proof of $k$-mode right unfolding, while that for $k$-mode left unfolding is analogous.

    Let $\zl{T}\in \quat{I_1\times \cdots \times I_{N}}$ and $\jz{V}_m\in \quat{I_m \times J_m}(m=1,2, \dots,k)$. Denote
    \[\jz{G}:=\jz{V}_N \otimes \cdots \otimes \jz{V}_{k} \otimes \jz{I}^{(k-1)} \otimes \cdots \otimes  \jz{I}^{(j+1)} \otimes \jz{I}^{(j-1)} \otimes \cdots \otimes \jz{I}^{(1)}.\]
    Our goal is to show that every entry $[\zl{T} \times^R_N \jz{V}_N \cdots \times^R_k \jz{V}_k]_{i_1\dots i_{k-1} j_k\dots j_N}$ is the same as the entry
    $\jz{T}^L_{[j]}\jz{G}(i_j,r)$.
    From \refdef{lproduct}, we have 
    \[[\zl{T} \times^R_N \jz{V}_N \cdots \times^R_k \jz{V}_k]_{i_1\dots i_{k-1} j_k\dots j_N}
    =\sum_{i_1=1}^{I_1}\cdots \sum_{i_N=1}^{I_N} t_{i_1i_2\dots i_N}v_{i_Nj_N}\cdots v_{i_kj_k}.\]
    On the other hand, we have 
    \begin{equation} \label{eq:element}
        \begin{split}
             \jz{T}^L_{[j]}\jz{G}(i_j,r)  
            =  \jz{T}^L_{[j]}(i_j,:)\jz{G}(:,r)  
            =  \sum_{p=1}^{P} \jz{T}^L_{[j]}(i_j,p)\jz{G}(p,r),
        \end{split}
    \end{equation}
    where
    \begin{align*}
        & P=I_N \cdots I_{j+1} I_{j-1} \cdots I_1;\quad p=1+\sum_{\substack{l=1 \\l\neq j}}^{N}\nolimits(i_l-1) P_l\quad \text{with}\quad 
        P_l=\prod \nolimits_{\substack{m=1 \\ m\neq j}}^{l-1}I_m. \\
        & r= 1+ \sum_{l=1}^{j-1} (i_l -1)R_l + \sum_{l=j+1}^{N}(j_l -1)R_l \quad \text{with} \quad R_l=\begin{cases}
            \prod \nolimits_{\substack{d=1}}^{l-1}I_d, & l< j\\
            \prod \nolimits_{\substack{d=1}}^{j-1}I_d, & l=j+1 \\
            (\prod \nolimits_{\substack{d=1}}^{j-1}I_d)(\prod \nolimits_{\substack{d=j+1}}^{l-1}J_d), & l>j+1.
        \end{cases} \\
    \end{align*}
    From the definition of Kronecker product we have 
    \[\jz{G}(p,r)=\jz{V}_N(i_N,j_N)\cdots \jz{V}_k(i_k,j_k)\jz{I}^{(k-1)}(i_{k-1},i_{k-1})\cdots \jz{I}^{(1)}(i_1,i_1)=\jz{V}_N(i_N,j_N)\cdots \jz{V}_k(i_k,j_k).\]
    Note that $\jz{T}^L_{[j]}(i_j,p)=t_{i_1i_2\dots i_N}$ and $\jz{V}(i_k,j_k)=v_{i_kj_k}$.
    Thus, we can rewrite \refeq{eq:element} as 
    \[\jz{T}^L_{[j]}\jz{G}(i_j,r)= \sum_{i_1=1}^{I_1}\cdots \sum_{i_N=1}^{I_N} t_{i_1i_2\dots i_N}v_{i_Nj_N}\cdots v_{i_kj_k}.\]
    And when $j=k$,  $\zl{Y}\times^R_k \jz{V}_k^H = \zl{T} \times^R_N \jz{V}_N \cdots \times^R_{k-1} \jz{V}_{k-1}$. Therefore we have
    \begin{align*}
        (\jz{Y}^R_{[k]}\jz{V}_k^H)^T = \jz{T}^L_{[j]}(\jz{V}_N \otimes \cdots \otimes \jz{V}_{j+1} \otimes \jz{I}^{(j-1)} \otimes \cdots \otimes \jz{I}^{(1)}),
    \end{align*}
    namely,
    \begin{align*}
        \jz{Y}^R_{[k]} = \bigxiaokuohao{(\jz{T}^R_{[j]})^T(\jz{V}_N \otimes \cdots \otimes \jz{V}_{j+1} \otimes \jz{I}^{(j-1)} \otimes \cdots \otimes \jz{I}^{(1)})}^T\jz{V}_k.
    \end{align*}

\end{document}